\theoremstyle{plain}
\newtheorem{theorem}[subsection]{Theorem}
\newtheorem{proposition}[subsection]{Proposition}
\newtheorem{lemma}[subsection]{Lemma}
\newtheorem{corollary}[subsection]{Corollary}
\newtheorem*{theorem*}{Theorem}
\newtheorem*{proposition*}{Proposition}
\newtheorem*{lemma*}{Lemma}
\newtheorem*{corollary*}{Corollary}
\theoremstyle{definition}
\newtheorem{remark}[subsection]{Remark}
\newtheorem*{definition*}{Definition}
\newtheorem*{remark*}{Remark}
\newtheorem*{example*}{Example}
\newtheorem*{openquestion*}{Open Question}
\def\al{\alpha}
\def\de{\delta}
\def\et{\eta}
\def\th{\theta}
\def\io{\iota}
\def\ph{\varphi}
\def\ch{\chi}
\def\ps{\psi}
\def\om{\omega}
\def\De{\Delta}
\def\Om{\Omega}
    \let\ii=\i           
\def\inv{^{-1}}
\def\x{\times}
\def\p{\partial}
\def\R{{\mathbb R}}
\let\on=\operatorname
\let\mc=\mathcal
\let\mf=\mathfrak
\newcommand{\ud}{\,\mathrm{d}}
\def\todomartins#1{}
\newcommand{\executeiffilenewer}[3]{%
\ifnum\pdfstrcmp{\pdffilemoddate{#1}}%
{\pdffilemoddate{#2}}>0%
{\immediate\write18{#3}}\fi%
}
\newcommand{%
\executeiffilenewer{.svg}{.pdf}%
{inkscape -z -D --file=.svg %
--export-pdf=.pdf --export-latex}%
\import{images/}{.pdf_tex}%
}[1]{%
\executeiffilenewer{#1.svg}{#1.pdf}%
{inkscape -z -D --file=#1.svg %
--export-pdf=#1.pdf --export-latex}%
\import{images/}{#1.pdf_tex}%
}
\title{Regularity of Maps between Sobolev Spaces}
\author{Martins Bruveris}
\date{\today}
\address{
Martins Bruveris, Department of Mathematics, 
Brunel University London, Ux\-bridge, UB8 3PH, United Kingdom}
\email{martins.bruveris@brunel.ac.uk}
\keywords{Diffeomorphism group, Sobolev spaces on manifolds}
\subjclass[2010]{Primary 58D15, Secondary 58D05, 58B20} 
\thanks{The author was partially supported by the Erwin Schr\"odinger Institute programme
Infinite-Dimensional Riemannian Geometry with Applications to Image Matching
and Shape Analysis.}
\begin{document}

\begin{abstract}
Let $F : H^q \to H^q$ be a $C^k$-map between Sobolev spaces, either on $\R^d$ or on a compact manifold. We show that equivariance of $F$ under the diffeomorphism group allows to trade regularity of $F$ as a nonlinear map for regularity in the image space: for $0 \leq l \leq k$, the map $F: H^{q+l} \to H^{q+l}$ is well-defined and of class $C^{k-l}$. This result is used to study the regularity of the geodesic boundary value problem for Sobolev metrics on the diffeomorphism group and the space of curves.
\end{abstract}

\maketitle

\section{Introduction}

The main result of this paper is inspired from and a generalisation of results on the smoothness of geodesics for right-invariant Riemannian metrics on the diffeomorphism group. Riemannian metrics on the diffeomorphism group have been studied since it was recognised in \cite{Arnold1966} that solutions of Euler's equations for incompressible fluids correspond to geodesics on the group of volume-preserving diffeomorphisms with respect to a right-invariant Riemannian metric. The well-posedness of Euler's equation was established in \cite{Ebin1970} by showing that the corresponding geodesic spray is a smooth vector field on the group $\mc D^q_{\on{vol}}(M)$ of volume-preserving $H^q$-diffeomorphisms for $q > \dim M/2 + 1$ and $M$ a compact manifold. A smooth geodesic spray on a Hilbert manifold gives rise to a smooth exponential map and because the metric is right-invariant, this exponential maps is $\mc D^q_{\on{vol}}(M)$-equivariant. 

The right-invariance of the exponential map was used in \cite{Ebin1970} to show the following result: if the initial conditions of a geodesic are of class $H^{q+k}$, then so is the whole geodesic. This property implies that smooth ($C^\infty$) initial conditions for Euler's equations have smooth solutions. The same property was observed for various other right-invariant Riemannian metrics on the diffeomorphism group as well as for reparametrisation invariant Riemannian metrics on the space of curves.

In this paper we want to prove a general version of this result, both for Sobolev spaces on Euclidean space and for Sobolev spaces on manifolds.

\begin{theorem*}
Let $q > \frac d2 + 1$, $0 \leq l \leq k$ and $F : H^q(\R^d, \R^n) \to H^q(\R^d, \R^m)$ be a $\mc D^q(\R^d)$-equivariant $C^k$-map, i.e. $F(u \circ \ph) = F(u) \circ \ph$. Then $F$ maps $H^{q+l}$ into $H^{q+l}$ and $F: H^{q+l}(\R^d, \R^n) \to H^{q+l}(\R^d, \R^m)$ is a $C^{k-l}$-map.
\end{theorem*}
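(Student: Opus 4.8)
The plan is to convert every statement about $F$ into a statement about the \emph{orbit maps} $\on{orb}_w : \mc D^q(\R^d) \to H^q$, $\on{orb}_w(\ph) = w \circ \ph$, exploiting the elementary principle that differentiating such an orbit map costs exactly one order of Sobolev regularity of $w$. Concretely I will use (establishing them first if they are not already available) two composition facts, valid for $q > \tfrac d2 + 1$ so that $H^{q-1}$ is a Banach algebra and the relevant products are continuous: (forward) if $w \in H^{q+l}$ then $\on{orb}_w \in C^l(\mc D^q, H^q)$, with $d\,\on{orb}_w(\ph)[Y] = (Dw \circ \ph)\cdot Y$ and higher derivatives given by the analogous chain-rule expressions; and (converse) if $w \in H^q$ and $\on{orb}_w \in C^l(\mc D^q, H^q)$, then in fact $w \in H^{q+l}$. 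The forward direction is a direct differentiation under composition; the converse is the delicate one, since it must recover Sobolev regularity from mere differentiability of an orbit, and I expect it to be the main analytic obstacle.

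For the mapping property, fix $u \in H^{q+l}$. By the forward fact $\on{orb}_u \in C^l(\mc D^q, H^q)$, and since $F : H^q \to H^q$ is $C^k$ with $k \ge l$, the composite $F \circ \on{orb}_u$ is again $C^l$ into $H^q$. Equivariance rewrites this composite as $F\circ\on{orb}_u = \on{orb}_{F(u)}$, so $\on{orb}_{F(u)} \in C^l(\mc D^q, H^q)$; the converse fact then yields $F(u) \in H^{q+l}$. Thus $F$ maps $H^{q+l}$ into $H^{q+l}$.

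For the regularity statement I plan to induct on $k$, reducing the regularity of $F$ to that of its derivative. Differentiating the equivariance relation $F(u\circ\ph) = F(u)\circ\ph$ in the $u$-variable gives $DF(u\circ\ph)[h\circ\ph] = (DF(u)[h])\circ\ph$, so the map $\tilde F(u,h) := DF(u)[h]$, from $H^q \times H^q$ to $H^q$, is of class $C^{k-1}$ and equivariant under the \emph{diagonal} action of $\mc D^q$. For $l \le k-1$ the same argument as above, applied to maps on a product (which the proof handles verbatim), shows $\tilde F$ maps $H^{q+l}\times H^{q+l}$ into $H^{q+l}$, and the inductive hypothesis on $k$ shows it does so as a $C^{k-1-l}$-map. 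One then checks that $u \mapsto DF(u)|_{H^{q+l}}$ is precisely the Fr\'echet derivative of $F : H^{q+l}\to H^{q+l}$, which upgrades well-definedness to differentiability and, carried through the induction, to class $C^{k-l}$.

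Besides the converse composition fact, the two points I expect to require care are: identifying the derivative of $F$ in the stronger $H^{q+l}$-topology with the restriction of $DF(u)$, rather than merely a G\^ateaux derivative; and handling the endpoint $l=k$, where only continuity of $F : H^{q+k}\to H^{q+k}$ is claimed and the derivative map is no longer available to feed the induction. At that endpoint I will argue continuity directly, combining the converse fact with the continuity of $F$ on $H^q$.
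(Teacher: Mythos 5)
Your skeleton is, at heart, the paper's own proof rearranged: the paper also differentiates the equivariance relation along the group (via one-parameter subgroups $\psi_t=\exp(t.\omega_i h_j)$ rather than full orbit maps), also passes to $DF$ viewed as a diagonally equivariant map on $H^q\times H^q\cong H^q(\R^d,\R^{2n})$, and also inducts on $k$. The first gap is that your ``converse fact'' --- that $C^l$-regularity of $\on{orb}_w$ forces $w\in H^{q+l}$ --- is the analytic heart of the whole theorem, and you give no indication of how it would be proved. The missing idea is a localized equivalent norm, $\|w\|_{H^{q+1}}\sim\|w\|_{H^q}+\sum_{i,j}\|\omega_i\,Dw.h_j\|_{H^q}$, where $(\omega_i)$ is a uniformly locally finite partition of unity and $h_1,\dots,h_d$ a basis of $\R^d$: differentiating $\on{orb}_{F(u)}=F\circ\on{orb}_u$ at the identity in the directions $\omega_i h_j$ produces exactly the quantities $\omega_i\,D\left(F(u)\right).h_j$ that this norm measures, and summing the resulting estimates gives $\|F(u)\|_{H^{q+1}}\lesssim\|F(u)\|_{H^q}+\|DF(u)\|_{\mc L(H^q,H^q)}\|u\|_{H^{q+1}}$. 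Note also that the paper's induction is arranged so that only the $l=1$ instance of this fact is ever needed, whereas your route requires it for all $l$.

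The second gap is a step that genuinely fails as described: your induction is purely qualitative, and at the endpoint degrees that is not enough. When $l=k-1$, your inductive hypothesis yields only that $\tilde F(u,h)=DF(u)[h]$ is \emph{jointly} continuous on $H^{q+l}\times H^{q+l}$; to identify $DF(u)|_{H^{q+l}}$ as the Fr\'echet derivative of $F:H^{q+l}\to H^{q+l}$ and conclude $F\in C^1$, you need $u\mapsto DF(u)$ to be continuous into $\mc L(H^{q+l},H^{q+l})$ in the operator norm, and joint continuity of a map linear in its second argument does not imply this: for a Hilbert-space-valued family $k(x)$ that is weakly continuous and norm-bounded but not norm-continuous, $(x,y)\mapsto\langle k(x),y\rangle$ is jointly continuous while $x\mapsto\langle k(x),\cdot\rangle$ is not operator-norm continuous. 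Likewise at $l=k$, well-definedness (your converse fact) plus continuity of $F$ on $H^q$ does not yield continuity of $F$ on $H^{q+k}$; there is no closed-graph principle for nonlinear maps. The paper closes precisely this hole by propagating quantitative estimates through the induction, e.g. $\|F(u)-F(v)\|_{H^{q+k}}\lesssim\sum_{j=0}^k\|D^jF(u)-D^jF(v)\|_{\mc L^j(H^q,H^q)}+\|D^jF(v)\|_{\mc L^j(H^q,H^q)}\|u-v\|_{H^{q+k}}$, valid uniformly on bounded $H^{q+k}$-balls: applied to $DF$, the right-hand side is independent of the direction vector $w$, so taking the supremum over $\|w\|_{H^{q+k}}\leq 1$ converts pointwise-in-$w$ statements into operator-norm statements, which is then fed into the reverse-Taylor lemma (Lem.~\ref{lem:reverse_taylor}/\ref{lem:extend_diff}). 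If you prove your converse fact in this quantitative form --- for differences, with constants uniform on bounded balls --- your outline becomes the paper's proof; without it, the inductive step cannot be closed.
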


Here and in the following we assume $q \in \R$ and $l,k \in \mathbb N$. The group $\mc  D^q(\R^d)$ is the group of $H^q$-diffeomorphisms; see Sect.~\ref{sec:Rd}.

Previously the strongest statement was that if $F: H^q \to H^q$ is a $\mc D^q$-equivariant $C^1$-map, then $F:H^{q+1} \to H^{q+1}$ is well-defined. No statement was made about the continuity or differentiability of the resulting map. Next we state the corresponding result for Sobolev spaces on manifolds. Let $M$ be a compact manifold and $N,P$ smooth manifolds, all without boundary.

\begin{theorem*}
Let $q > \dim M/ 2 + 1$, $0 \leq l \leq k$ and $F : H^q(M, N) \to H^q(M, P)$ be a $\mc D^q(M)$-equivariant $C^k$-map, i.e. $F(u \circ \ph) = F(u) \circ \ph$. Then $F$ maps $H^{q+l}$ into $H^{q+l}$ and $F: H^{q+l}(M, N) \to H^{q+l}(M, P)$ is a $C^{k-l}$-map.
\end{theorem*}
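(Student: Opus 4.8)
The plan is to deduce the manifold statement from the Euclidean theorem by localising on the source $M$. Both conclusions---that $F$ maps $H^{q+l}(M,N)$ into $H^{q+l}(M,P)$ and that the restricted map is of class $C^{k-l}$---are local properties on $M$: membership in $H^{q+l}$ can be tested in coordinate charts, and differentiability of a map into the mapping manifold $H^{q+l}(M,P)$ can be read off in charts built from a finite atlas. Since $M$ is compact, I would fix a finite cover by charts $\psi_i : U_i \to V_i \subseteq \R^d$ with a subordinate partition of unity, and reduce the theorem to a statement about the behaviour of $F(u)$ over a single chart, where $d = \dim M$ so that the threshold $q > d/2 + 1$ matches the hypothesis of the first theorem.

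First I would dispose of the target manifolds. Embedding $N \hookrightarrow \R^{n'}$ and $P \hookrightarrow \R^{m'}$ by Whitney's theorem realises $H^q(M,N)$ and $H^q(M,P)$ as smooth submanifolds of the linear spaces $H^q(M,\R^{n'})$ and $H^q(M,\R^{m'})$, with charts supplied by the retraction of a tubular neighbourhood; these embeddings intertwine precomposition by $\mc D^q(M)$, so the equivariance relation $F(u\circ\ph)=F(u)\circ\ph$ is preserved. Continuity of $F$ guarantees that, near a fixed $u_0$, the values $F(u)$ stay inside the tubular neighbourhood on which the retraction chart of $H^q(M,P)$ is defined, so locally $F$ is represented by a vector-valued $C^k$-map between open subsets of $H^q(M,\R^{n'})$ and $H^q(M,\R^{m'})$.

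The heart of the argument is the transfer of equivariance to a chart. Diffeomorphisms of $M$ supported in $U_i$ correspond, via $\psi_i$, to compactly supported diffeomorphisms of $V_i$, which extend by the identity to elements of $\mc D^q(\R^d)$. The key point I would establish is that $F$ is a \emph{local operator}: if two maps agree on an open set, then so do their images under $F$. This follows from equivariance under chart-supported diffeomorphisms together with the continuity of $F$, since any $\ph$ equal to the identity near a point $x$ satisfies $F(u\circ\ph)(x)=F(u)(\ph(x))=F(u)(x)$ while $u\circ\ph$ agrees with $u$ near $x$ yet ranges freely away from $x$. Locality lets me define, from $u|_{U_i}$ transported to $\R^d$ (extended outside a compact set by a fixed reference map and made vector-valued through the embedding), a map $\tilde F_i : H^q(\R^d,\R^{n'}) \to H^q(\R^d,\R^{m'})$ that is $\mc D^q(\R^d)$-equivariant and of class $C^k$, and whose regularity over $V_i$ reproduces that of $F$ over $U_i$.

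With $\tilde F_i$ in hand, the Euclidean theorem yields that $\tilde F_i$ maps $H^{q+l}$ into $H^{q+l}$ and is $C^{k-l}$ there. Feeding this back through the charts $\psi_i$ and summing against the partition of unity gives the two desired conclusions on $M$; the Sobolev gluing and the passage between chart-wise differentiability and differentiability of the map into $H^{q+l}(M,P)$ are routine given the finiteness of the cover. The main obstacle I anticipate is precisely this transfer step: proving locality of $F$ from equivariance and continuity alone, and constructing the local model $\tilde F_i$ as a genuinely $\mc D^q(\R^d)$-equivariant $C^k$-map so that the first theorem applies verbatim---once this is set up, the regularity gain is imported directly and the remainder is bookkeeping.
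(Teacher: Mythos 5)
Your reduction of the targets $N$ and $P$ via Whitney embeddings and tubular neighbourhoods is exactly the paper's first step, and is fine. The genuine gap is the step you yourself identify as the heart of the argument: the claim that equivariance plus continuity forces $F$ to be a \emph{local} operator. This claim is false, and the flaw is visible in your own justification, namely the phrase that $u\circ\ph$ ``ranges freely away from $x$''. It does not: maps of the form $u\circ\ph$ are reparametrizations of $u$, with the same image and the same diffeomorphism invariants, so equivariance only constrains $F$ along this very thin family of perturbations and says nothing about comparing $F(u)$ with $F(v)$ for an arbitrary $v$ agreeing with $u$ near $x$. A concrete counterexample: take $M=S^1$, $N=P=\R^2$, and
\[
F(c) = \Bigl(\int_{S^1} c_1\, c_2' \ud\th\Bigr)^{2} c\,.
\]
The functional $\La(c)=\int_{S^1} c_1 c_2'\ud\th$ (signed enclosed area) is a bounded bilinear form on $H^q(S^1,\R^2)$ for $q\geq 1$, hence $C^\infty$, and under $c\mapsto c\circ\ph$ it changes at most by a sign; therefore $F$ is a $\mc D^q(S^1)$-equivariant $C^\infty$-map $H^q\to H^q$, yet it is manifestly nonlocal, since two curves agreeing near a parameter $x$ but enclosing different areas have different values of $F$ at $x$. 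Consequently $F(u)|_{U_i}$ cannot be computed from $u|_{U_i}$, your chart models $\tilde F_i$ are not well-defined, and no refinement of the locality argument can rescue them. (A secondary problem, which you would hit even if locality held, is that the chart model would only be equivariant under diffeomorphisms supported in $V_i$, not under all of $\mc D^q(\R^d)$, so the Euclidean theorem would not apply verbatim.)

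The paper's proof avoids source-localization entirely, and this is the substitute you need. After the target reduction, it repeats the Euclidean Step~1 globally on $M$: choose finitely many smooth vector fields $X_1,\dots,X_A\in\mf X(M)$ with $\on{span}\{X_1(x),\dots,X_A(x)\}=T_xM$ for all $x$, use the equivalent norm $\|u\|_{H^q}\sim \|u\|_{H^{q-1}}+\sum_j\|Tu.X_j\|_{H^{q-1}}$, and differentiate the identity $F(u\circ\ps_t)=F(u)\circ\ps_t$ at $t=0$ along the flows $\ps_t=\on{exp}(t X_j)$, obtaining $DF(u).(Tu.X_j)=T(F(u)).X_j$ and hence $\|F(u)\|_{H^{q+1}}\lesssim \|F(u)\|_{H^q}+\|DF(u)\|_{\mc L(H^q,H^q)}\|u\|_{H^{q+1}}$. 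Only equivariance along these globally defined one-parameter subgroups is used; locality of $F$ is never needed and, as the example above shows, never available. The bootstrap in $l$ and the $C^{k-l}$ statement then follow exactly as in the Euclidean proof.
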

In a nutshell this results states that given an equivariant map $F:H^q\to H^q$ we can trade smoothness of the map to gain spatial smoothness of the image $F(u)$. If $F$ is a $C^\infty$-map, then it also induces a $C^\infty$-map between the spaces $H^\infty$.

\begin{corollary*}
Let $q > \frac d2+1$ and $F : H^q(\R^d, \R^n) \to H^q(\R^d, \R^m)$ be a $\mc D^q(\R^d)$-equivariant $C^\infty$-map. Then $F : H^\infty(\R^d, \R^n) \to H^\infty(\R^d, \R^m)$ is a $C^\infty$-map.
\end{corollary*}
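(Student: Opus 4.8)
The plan is to bootstrap the $\R^d$-Theorem from the finite levels $H^{q+l}$ up to the Fréchet space $H^\infty$. Since $F$ is assumed $C^\infty$, it is in particular $C^k$ for every $k \in \mathbb N$. Fix $l \geq 0$ and apply the Theorem with this $l$ and with $k$ arbitrarily large: for every $k \geq l$ the map $F : H^{q+l}(\R^d,\R^n) \to H^{q+l}(\R^d,\R^m)$ is well-defined and of class $C^{k-l}$. As $k$ may be chosen arbitrarily large, the induced map $F_l := F|_{H^{q+l}} : H^{q+l} \to H^{q+l}$ is in fact of class $C^\infty$, for every $l$. In particular, for $u \in H^\infty = \bigcap_{l \geq 0} H^{q+l}$ we have $F(u) \in H^{q+l}$ for all $l$, so $F(u) \in H^\infty$; thus $F$ maps $H^\infty$ into itself. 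Note also that the maps $F_l$ are mutually compatible, since they are all restrictions of the single set-theoretic map $F$.

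Next I would recall that $H^\infty$, equipped with its natural Fréchet topology, is the projective limit $\varprojlim_{l} H^{q+l}$ of the Hilbert spaces $H^{q+l}$ along the continuous linear inclusions $H^{q+l'} \hookrightarrow H^{q+l}$ for $l' \geq l$; the index set is countable, so the limit is indeed Fréchet. Smoothness of maps between such locally convex spaces is understood in the standard sense (Michal--Bastiani, equivalently the convenient calculus), for which two facts are available: continuous linear maps are smooth, and a map $G$ into a projective limit is smooth if and only if $\pi_l \circ G$ is smooth for every $l$, where $\pi_l : H^\infty \to H^{q+l}$ denotes the canonical projection, which here is simply the inclusion.

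The conclusion then follows by assembling these pieces. For each $l$ the composite $\pi_l \circ F : H^\infty \to H^{q+l}$ equals $F_l \circ \iota_l$, where $\iota_l : H^\infty \to H^{q+l}$ is the canonical continuous linear inclusion; this uses the compatibility of the $F_l$ observed above. Since $\iota_l$ is continuous linear, hence smooth, and $F_l : H^{q+l} \to H^{q+l}$ is $C^\infty$ by the first step, the composite $\pi_l \circ F$ is smooth for every $l$. By the projective-limit characterisation of smoothness, $F : H^\infty \to H^\infty$ is $C^\infty$. The same bookkeeping shows that the iterated derivatives $d^j F$ computed at the various levels agree and take values in $H^\infty$, so no separate argument for the derivatives is required.

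I expect the main obstacle to be the calculus-theoretic ingredient rather than the analytic one: once the Theorem is granted, all the estimates are done, and the remaining work is to pin down the correct notion of $C^\infty$ on the Fréchet space $H^\infty$ and to justify the equivalence ``smooth into a projective limit $\iff$ smooth in each coordinate''. This is the one point that genuinely leaves the Banach (Hilbert) setting of the main Theorem, so it is where care is needed; everything else reduces to composing smooth maps and to the trivial smoothness of the continuous linear inclusions $\iota_l$.
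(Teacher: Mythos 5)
Your proposal is correct and follows exactly the route the paper intends: the corollary is stated in the introduction as an immediate consequence of the main theorem, obtained by fixing $l$ and letting $k \to \infty$ to get $C^\infty$-smoothness of $F$ on each $H^{q+l}$, and then passing to $H^\infty = \varprojlim_l H^{q+l}$ via the standard fact that smoothness into a (countable) projective limit is detected levelwise. Your added care about the calculus framework (Michal--Bastiani/convenient, plus the compatibility of the restrictions $F_l$ and the smoothness of the linear inclusions) is precisely the bookkeeping the paper leaves implicit, so nothing is missing.
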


The same can be also formulated for maps defined on compact manifolds.

\begin{corollary*}
Let $q > \dim M/2+1$ and $F : H^q(M, N) \to H^q(M, P)$ be a $\mc D^q(M)$-equivariant $C^\infty$-map. Then $F : C^\infty(M,N) \to C^\infty(M,P)$ is a $C^\infty$-map.
\end{corollary*}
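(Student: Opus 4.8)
The plan is to deduce this from the manifold version of the main theorem by first upgrading the finite-regularity statement to smoothness at each Sobolev level, and then assembling these levelwise maps into a single smooth map of Fréchet manifolds. Since $F$ is $C^\infty$, it is in particular a $\mc D^q(M)$-equivariant $C^k$-map for every $k$. Fixing $l \in \mathbb N$ and applying the theorem with $k$ ranging over all integers $\geq l$, I obtain that $F : H^{q+l}(M,N) \to H^{q+l}(M,P)$ is well-defined and of class $C^{k-l}$ for every such $k$; as $k$ is arbitrary, this means $F : H^{q+l}(M,N) \to H^{q+l}(M,P)$ is in fact a $C^\infty$-map. Thus for \emph{every} $l \geq 0$ the map $F$ preserves $H^{q+l}$ and is smooth between these Hilbert manifolds.

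Second, I would identify the relevant Fréchet manifold. Because $M$ is compact, Sobolev embedding gives $H^{q+l}(M,N) \hookrightarrow C^r(M,N)$ once $q+l > \dim M/2 + r$, so that $\bigcap_{l \geq 0} H^{q+l}(M,N) = C^\infty(M,N)$, with the $C^\infty$-topology coinciding with the projective limit topology; the same holds for $P$. Concretely $C^\infty(M,N) = \varprojlim_l H^{q+l}(M,N)$ as a Fréchet manifold, the bonding maps being the smooth inclusions $H^{q+l+1} \hookrightarrow H^{q+l}$. Since $F$ maps each $H^{q+l}$ into itself and commutes with these inclusions (it is literally the same set-theoretic map at every level), it carries the intersection into the intersection and therefore restricts to a well-defined map $F : C^\infty(M,N) \to C^\infty(M,P)$.

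Finally, smoothness of $F : C^\infty(M,N) \to C^\infty(M,P)$ is checked componentwise against the projective limit. A map into $\varprojlim_l H^{q+l}(M,P)$ is smooth precisely when each component, i.e. each composite with a projection $\pi_l$ onto $H^{q+l}(M,P)$, is smooth, this being the universal property of the projective limit with its initial (convenient/Fréchet) smooth structure. Here $\pi_l \circ F$ equals the composite $C^\infty(M,N) \hookrightarrow H^{q+l}(M,N) \xrightarrow{\,F\,} H^{q+l}(M,P)$, which is smooth because the inclusion is smooth and $F$ is a $C^\infty$-map at level $l$ by the first step. Hence every component is smooth and $F : C^\infty(M,N) \to C^\infty(M,P)$ is smooth. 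I expect the main obstacle to be this last step: making rigorous the componentwise characterisation of smoothness into a projective limit of Hilbert manifolds, which requires working in the convenient calculus of Kriegl–Michor (or a careful Fréchet-differentiability argument) and verifying that the $C^\infty$-manifold structure on $C^\infty(M,N)$ really is the projective limit of the $H^{q+l}(M,N)$, including compatibility of the chart atlases.
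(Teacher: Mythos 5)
Your proposal is correct and is essentially the argument the paper intends: the paper states this corollary without proof as an immediate consequence of Thm.~\ref{thm:thm_main}, namely applying it for all $k \geq l$ to get smoothness of $F$ on every $H^{q+l}(M,N)$, and then using that for compact $M$ the Fr\'echet manifold $C^\infty(M,N)$ is the projective limit $\varprojlim_l H^{q+l}(M,N)$, on which smoothness is checked levelwise. Your filling-in of the projective-limit details (compatibility of charts, componentwise smoothness in the convenient/ILH sense) is exactly the standard justification the paper leaves implicit.
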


In Sect.~\ref{sec:diff} we apply this theorem to study the regularity of the geodesic boundary value problem for right-invariant Riemannian metrics on the diffeomorphism group. We show that if $\ph_0$ and $\ph_1$ are nonconjugate along the geodesic $\ph(t)$, then the whole geodesic is as smooth as $\ph_0$ and $\ph_1$. In Sect.~\ref{sec:curves} we show the same result for reparametrization invariant Sobolev metrics on the space of curves.

\subsection*{Note}
We will write $u \lesssim v$, if the inequality $u \leq C v$ holds for some constant $C > 0$, that may depend on the parameters $q, k, l, d, m, n$ and the manifolds $M, N, P$ involved, but is independent of the functions $F, u, v$. The constant may also depend on the auxiliary functions $\om_i, h_j$ and $X_j$ introduced in the proofs and additional dependencies will be stated in the text.

\section{Differentiability in Banach Spaces}

For Banach spaces $E, F$ we denote by $\mc L^k_{\on{sym}}(E,F)$ the space of bounded, symmetric, $k$-linear mappings $E \x \dots \x E \to F$. Let $U \subseteq E$ be open. A function $f : U \to F$ is $C^1$, if it is Fr\'echet differentiable and the derivative $Df : U \to \mc L(E, F)$ is continuous.

The following lemma is standard and is stated without proof.

\begin{lemma}
\label{lem:reverse_taylor}
Let $E, F$ be Banach spaces and $U \subseteq E$ a convex, open set. Let $\al \in C(U, \mc L(E,F))$. Assume that $f : U \to F$ is a mapping, such that
\[
f(y) = f(x)+ \int_0^1 \al(x+t(y-x)).(y-x) \ud t\,,
\]
holds for all $x,y \in U$. Then $f \in C^{1}(U, F)$ and $D f = \al$.
\end{lemma}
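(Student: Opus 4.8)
The plan is to verify directly that $f$ is Fr\'echet differentiable at each point $x \in U$ with derivative $\al(x)$; since $\al$ is continuous by assumption, this immediately upgrades $f$ to a $C^1$-map with $Df = \al$. First I would fix $x \in U$ and note that, because $U$ is open and convex, for every $y$ in a sufficiently small ball around $x$ the entire segment $\{x + t(y-x) : t \in [0,1]\}$ lies in $U$, so that the integral identity of the hypothesis applies and the integrand $t \mapsto \al(x+t(y-x)).(y-x)$ is a continuous, hence integrable, $F$-valued function on $[0,1]$.

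The next step is to rewrite the first-order remainder in a convenient form. Writing $\al(x).(y-x) = \int_0^1 \al(x).(y-x) \ud t$ and subtracting from the hypothesis yields
\[
f(y) - f(x) - \al(x).(y-x) = \int_0^1 \bigl(\al(x+t(y-x)) - \al(x)\bigr).(y-x) \ud t\,.
\]
Taking norms and bounding the integrand by the operator norm of $\al(x+t(y-x)) - \al(x)$ times $\|y-x\|$, I obtain the estimate
\[
\frac{\|f(y) - f(x) - \al(x).(y-x)\|}{\|y-x\|} \leq \sup_{t \in [0,1]} \bigl\| \al(x+t(y-x)) - \al(x) \bigr\|_{\mc L(E,F)}\,.
\]

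The key step is the passage to the limit $y \to x$ on the right-hand side. The intermediate points satisfy $\|(x+t(y-x)) - x\| = t\,\|y-x\| \leq \|y-x\|$, so they converge to $x$ uniformly in $t \in [0,1]$. Hence, given $\ep > 0$, continuity of $\al$ at $x$ furnishes a $\de > 0$ with $\|z - x\| < \de \Rightarrow \|\al(z) - \al(x)\|_{\mc L(E,F)} < \ep$; then $\|y - x\| < \de$ forces the supremum above, and therefore the difference quotient, to be at most $\ep$. This is precisely Fr\'echet differentiability of $f$ at $x$ with $Df(x) = \al(x)$. As $x \in U$ was arbitrary and $Df = \al$ is continuous by assumption, the proof is complete. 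I do not expect a genuine obstacle here — the lemma is standard — the only point demanding a little care being the uniformity in $t$ when taking the limit, which is supplied by the crude bound $t\|y-x\| \leq \|y-x\|$ controlling all intermediate points simultaneously.
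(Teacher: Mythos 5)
Your proof is correct. Note that the paper states this lemma as standard and omits the proof entirely, so there is no paper argument to compare against; your verification --- rewriting $\al(x).(y-x)$ as $\int_0^1 \al(x).(y-x)\ud t$, subtracting inside the integral, and controlling all intermediate points simultaneously via $t\|y-x\| \leq \|y-x\|$ so that continuity of $\al$ at $x$ kills the difference quotient --- is exactly the standard argument the paper implicitly invokes, and it correctly supplies the omitted details (convexity guaranteeing the segment lies in $U$, continuity of the integrand guaranteeing integrability, and continuity of $Df=\al$ giving the $C^1$ conclusion).
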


The next lemma shows, that if a function is differentiable on a dense subspace of a Banach space and the derivatives can be extended to continuous maps on the bigger space, then the function is differentiable on the bigger space.

\begin{lemma}
\label{lem:extend_diff}
Let $E, F, G$ be Banach spaces, $E \subseteq F$ a dense subspace, $f \in C^k(U, G)$ with $U = V \cap E$ and $V \subseteq F$ open. If we can extend $f$ and its derivatives $D^j f$ to $D^j f \in C(V, \mc L^j_{\on{sym}}(F, G))$ for $0 \leq j \leq k$, then $f \in C^k(V, G)$.
\end{lemma}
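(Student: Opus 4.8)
The statement is that if $f \in C^k(U,G)$ with $U = V \cap E$, and both $f$ and all its derivatives $D^j f$ (for $0 \leq j \leq k$) admit continuous extensions to $V$ with values in $\mc L^j_{\on{sym}}(F,G)$, then the extended $f$ is actually $C^k$ on the larger open set $V \subseteq F$. Let me think about how I would attack this.

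The plan is to argue by induction on $k$, using Lemma~\ref{lem:reverse_taylor} as the engine that upgrades a continuous candidate derivative into an actual one. For $k=0$ there is nothing to prove: the hypothesis is precisely that $f$ extends to a continuous map on $V$. For the inductive step, assume the statement for $k-1$. Since the hypotheses for $k$ trivially imply those for $k-1$ (restrict attention to the derivatives $D^j f$, $0\le j\le k-1$), the induction hypothesis already yields $f\in C^{k-1}(V,G)$ with $D^j f$ on $V$ equal to the prescribed continuous extension for each $0\le j\le k-1$. It then remains only to gain a single derivative at the top order, i.e.\ to show that $h:=D^{k-1}f\colon V\to H$, where $H:=\mc L^{k-1}_{\on{sym}}(F,G)$, is of class $C^1$ with $Dh$ equal to the extension of $D^k f$.

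To produce this derivative I would pass the first-order Taylor formula with integral remainder from the dense subspace to all of $V$. Fix a point of $V$ and a convex open ball $B\subseteq V$ around it. On $U\cap B$ the map $h=D^{k-1}f$ is $C^1$ (because $f\in C^k(U,G)$), so the fundamental theorem of calculus gives
\[
h(y)=h(x)+\int_0^1 D^k f\bigl(x+t(y-x)\bigr).(y-x)\ud t
\]
for all $x,y\in U\cap B$. Both sides of this identity are continuous in $(x,y)\in B\times B$: the left-hand side because $h$ is continuous on $V$ by the induction hypothesis, and the right-hand side because $D^k f$ extends continuously to $V$ and the integrand then depends continuously on $(x,y,t)$ over the compact interval $[0,1]$. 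Since $E$ is dense in $F$, the set $U\cap B=(V\cap E)\cap B$ is dense in $B$, hence $(U\cap B)^2$ is dense in $B\times B$, and the identity persists on all of $B\times B$ by continuity. Here I use that on increments taken from $E$ the extended derivative agrees with the original one, so that the formula valid on $U$ is literally the same identity being extended.

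An application of Lemma~\ref{lem:reverse_taylor} on $B$, with $\al$ the continuous extension of $D^k f$, then shows $h\in C^1(B,H)$ with $Dh=\al$; as $B$ ranges over a cover of $V$ this gives $h=D^{k-1}f\in C^1(V,H)$, i.e.\ $f\in C^k(V,G)$ with $D^k f$ equal to its prescribed extension, closing the induction. Two bookkeeping points deserve care. First, to feed $D^k f$ into Lemma~\ref{lem:reverse_taylor} one must read it through the standard isometric identification $\mc L^k_{\on{sym}}(F,G)\hookrightarrow\mc L\bigl(F,\mc L^{k-1}_{\on{sym}}(F,G)\bigr)=\mc L(F,H)$ and check that this identification is compatible with the corresponding one over $E$; this is routine but notationally heavy. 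Second, and this is where I expect the only genuine subtlety to lie, the whole argument hinges on the passage from the dense subspace $U$ to $V$ by continuity, so one must carefully verify the joint continuity of the remainder integral in $(x,y)$ together with the agreement of the extended and original derivatives on $E$-increments — without these two facts the Taylor identity would not transfer, and the candidate derivative could fail to be the true one.
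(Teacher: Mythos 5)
Your proof is correct and takes essentially the same route as the paper's: transfer the integral Taylor/fundamental-theorem-of-calculus identity from the dense set $U$ to $V$ using continuity of the extended derivatives, then apply Lemma~\ref{lem:reverse_taylor} repeatedly to upgrade regularity one order at a time. The only difference is organizational --- you induct on $k$ and localize to convex balls (thereby handling the convexity hypothesis of Lemma~\ref{lem:reverse_taylor}, which the paper's proof glosses over), whereas the paper extends all identities at once and then iterates.
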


\begin{proof}
We have for $x, y \in U$, and $0 \leq j \leq k-1$,
\begin{equation}
\label{eq:Dkf}
D^{j} f(y) = D^{j} f(x) + \int_0^1 D^{j+1} f(x+ t(y-x)).(y-x) \ud t\,.
\end{equation}
Since $D^j f$ can be extended to $D^j f \in C(V, \mc L^j_{\on{sym}}(F, G))$ and both sides in~\eqref{eq:Dkf} are continuous on $V$, the identity continues to hold for $x,y \in V$. 

Now we argue inductively: Since $f \in C(V, G)$ and $Df$ can be extended to $V$, we obtain by Lem.~\ref{lem:reverse_taylor} that $f \in C^1(V, G)$. Now we apply Lem.~\ref{lem:reverse_taylor} with $Df$ in place of $f$ to conclude that $Df \in C^1(V, \mc L(F, G))$ and so $f \in C^2(V, G)$. In this way we obtain inductively that $f \in C^k(V, G)$.
\end{proof}

\section{$\mc D^q(\R^d)$-equivariant Maps}
\label{sec:Rd}

The Sobolev spaces $H^q(\R^d)$ with $q \in \R$ can be defined in terms of the Fourier transform
\[ 
\mc F f(\xi) = (2\pi)^{-d/2} \int_{\R^d} e^{-i \langle  x,\xi\rangle} f(x) \ud x\,,
\]
and consist of $L^2$-integrable functions $f$ with the property that $(1+|\xi|^2)^{q/2} \mc F f$ is $L^2$-integrable as well. An inner product on $H^q(\R^d)$ is given by
\[
\langle f, g \rangle_{H^q} = \mf{Re} \int_{\R^d} (1 + |\xi|^2)^q \mc F f(\xi) \overline{\mc F g(\xi)} \ud \xi\,.
\]

Denote by $\on{Diff}^1(\R^d)$ the group of $C^1$-diffeomorphisms of $\R^d$, i.e.,
\[
\on{Diff}^1(\R^d) = \{ \ph \in C^1(\R^d,\R^d) \,:\, \ph \text{ bijective, } \ph\inv \in C^1(\R^d,\R^d) \}\,.
\] 
For $q > d/2+1$ and $q \in \R$ there are three equivalent ways of defining the group $\mc D^q(\R^d)$ of Sobolev diffeomorphisms:
\begin{align*}
\mc D^q(\R^d) &= \{ \ph \in \on{Id} + H^q(\R^d,\R^d) \,:\, \ph \text{ bijective, }
\ph\inv \in \on{Id} + H^q(\R^d,\R^d) \} \\
&= \{ \ph \in \on{Id} + H^q(\R^d,\R^d) \,:\, 
\ph \in \on{Diff}^1(\R^d) \} \\
&= \{ \ph \in \on{Id} + H^q(\R^d,\R^d) \,:\, 
\det D\ph(x) > 0,\, \forall x \in \R^d \}\,.
\end{align*}
If we denote the three sets on the right by $A_1$, $A_2$ and $A_3$, then it is not difficult to see the inclusions $A_1 \subseteq A_2 \subseteq A_3$. The equivalence $A_1 = A_2$ has first been shown in \cite[Sect. 3]{Ebin1970b} for the diffeomorphism group of a compact manifold; a proof for $\mc D^q(\R^d)$ can be found in \cite{Inci2013}. Regarding the inclusion $A_3 \subseteq A_2$, it is shown in \cite[Cor. 4.3]{Palais1959} that if $\ph \in C^1$ with $\det D\ph(x) > 0$ and $\lim_{|x |\to \infty} | \ph(x)| = \infty$, then $\ph$ is a $C^1$-diffeomorphism.

It follows from the Sobolev embedding theorem, that $\mc D^q(\R^d) - \on{Id}$ is an open subset of $H^q(\R^d,\R^d)$ and thus a Hilbert manifold. Since each $\ph \in \mc D^q(\R^d)$ has to decay to the identity as $|x|\to \infty$, it follows that $\ph$ is orientation preserving. More importantly, $\mc D^q(\R^n)$ is a topological group, but not a Lie group, since left-multiplication and inversion are continuous, but not smooth operations.

We will use the following two results regarding the multiplication and composition of functions in Sobolev spaces.

\begin{lemma}[{\cite[Lem.~2.3]{Inci2013}}]
Let $q, r \in \R$ with $q > \frac d2$ and $0 \leq r \leq q$. Then pointwise multiplication
\[
H^q(\R^d, \R) \times H^r(\R^d,\R) \to H^r(\R^d,\R)\,,\quad
(f, g) \mapsto f \cdot g\,,
\]
is a bounded bilinear map.
\end{lemma}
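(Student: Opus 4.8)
The plan is to pass to the Fourier side, where multiplication becomes convolution. With the convention of the excerpt, $\mc F(fg) = (2\pi)^{-d/2}\,\mc F f * \mc F g$, so writing $\langle\xi\rangle = (1+|\xi|^2)^{1/2}$ I would start from
\[
\|fg\|_{H^r}^2 \;=\; (2\pi)^{-d}\int_{\R^d}\langle\xi\rangle^{2r}
\Big|\int_{\R^d}\mc F f(\xi-\eta)\,\mc F g(\eta)\ud\eta\Big|^2\ud\xi .
\]
Two elementary facts drive everything. First, $q>\frac d2$ forces $\mc F f\in L^1$ with $\|\mc F f\|_{L^1}\lesssim\|f\|_{H^q}$, by Cauchy--Schwarz against $\langle\cdot\rangle^{-q}\in L^2$ (integrability being exactly the condition $2q>d$); second, the inequality $\langle\xi\rangle^r\lesssim\langle\xi-\eta\rangle^r+\langle\eta\rangle^r$ for $r\ge 0$, which follows from $\langle\xi\rangle\le\langle\xi-\eta\rangle+\langle\eta\rangle$.

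Inserting the second inequality splits the weighted integrand into a term in which the derivatives fall on $g$ and one in which they fall on $f$. The first of these, $I_2 = |\mc F f|*(\langle\cdot\rangle^r|\mc F g|)$, is benign: Young's inequality $\|a*b\|_{L^2}\le\|a\|_{L^1}\|b\|_{L^2}$ together with $\mc F f\in L^1$ gives $\|I_2\|_{L^2}\le\|\mc F f\|_{L^1}\,\|\langle\cdot\rangle^r\mc F g\|_{L^2}\lesssim\|f\|_{H^q}\|g\|_{H^r}$, using $\|\langle\cdot\rangle^r\mc F g\|_{L^2}=\|g\|_{H^r}$.

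The remaining term $I_1 = (\langle\cdot\rangle^r|\mc F f|)*|\mc F g|$ is the crux, and I expect it to be the main obstacle. Here $\langle\cdot\rangle^r\mc F f\in L^2$ and $\mc F g\in L^2$, but the convolution of two $L^2$ functions need not lie in $L^2$, and in the borderline range $\frac d2<q<r+\frac d2$ neither factor is in $L^1$, so a single Young estimate is insufficient. I would resolve this by a generalized Young argument combined with Hölder: write $\langle\cdot\rangle^r\mc F f=\langle\cdot\rangle^{r-q}\,(\langle\cdot\rangle^q\mc F f)$ and $\mc F g=\langle\cdot\rangle^{-r}\,(\langle\cdot\rangle^r\mc F g)$, peel off the two $L^2$-controlled factors $\langle\cdot\rangle^q\mc F f$ and $\langle\cdot\rangle^r\mc F g$, and absorb the weights $\langle\cdot\rangle^{r-q}$ and $\langle\cdot\rangle^{-r}$ into auxiliary Lebesgue norms. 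Applying $\|a*b\|_{L^2}\le\|a\|_{L^{p_1}}\|b\|_{L^{p_2}}$ with $\tfrac1{p_1}+\tfrac1{p_2}=\tfrac32$, the whole estimate closes provided exponents $a_1,a_2$ can be chosen with $\langle\cdot\rangle^{r-q}\in L^{a_1}$, $\langle\cdot\rangle^{-r}\in L^{a_2}$ and $\tfrac1{a_1}+\tfrac1{a_2}=\tfrac12$, that is $\tfrac1{a_1}<\tfrac{q-r}{d}$ and $\tfrac1{a_2}<\tfrac rd$. Such a choice exists precisely because $\tfrac{q-r}d+\tfrac rd=\tfrac qd>\tfrac12$; this is the one place where the hypothesis $q>\frac d2$ is genuinely used.

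The generalized Young argument degenerates at the endpoints $r=0$ and $r=q$, where one of the two weights fails to be integrable, so I would dispatch these directly: for $r=0$ the bound $\|fg\|_{L^2}\le\|f\|_{L^\infty}\|g\|_{L^2}$ follows from the Sobolev embedding $H^q\hookrightarrow L^\infty$, and for $r=q$ it is the algebra property of $H^q$, where $I_1$ and $I_2$ are controlled symmetrically by Young's inequality using $\mc F f,\mc F g\in L^1$. A cleaner but less self-contained route would avoid the exponent bookkeeping altogether: the multiplication operator $M_f:g\mapsto fg$ is bounded on $H^0=L^2$ and on $H^q$ with norm $\lesssim\|f\|_{H^q}$ in both cases, so complex interpolation $[H^0,H^q]_\theta=H^{\theta q}$ yields boundedness on every $H^r$, $0\le r\le q$, with the same bound. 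Either way, bilinearity is clear, and since a bilinear map that is bounded on a bounded set of norms is continuous, the asserted boundedness of $(f,g)\mapsto f\cdot g$ follows.
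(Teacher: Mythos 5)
The paper offers no proof of this lemma to compare against: it is imported verbatim from Inci--Kappeler--Topalov \cite{Inci2013}, and the citation is the paper's entire justification. Judged on its own, your proof is correct. The pointwise bound $\langle\xi\rangle^r \lesssim \langle\xi-\eta\rangle^r + \langle\eta\rangle^r$ is valid for $r \ge 0$ (subadditivity of $\langle\cdot\rangle$, seen from the triangle inequality applied to $(1,\xi) = (1,\xi-\eta)+(0,\eta)$ in $\R^{d+1}$), and it reduces the claim to your two convolution estimates. The term $I_2$ is indeed immediate from Young's inequality and $\|\mc F f\|_{L^1} \lesssim \|f\|_{H^q}$. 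For the crux term $I_1$ your exponent bookkeeping closes: you need $\tfrac1{a_1}+\tfrac1{a_2}=\tfrac12$ with $\tfrac1{a_1}<\tfrac{q-r}{d}$ and $\tfrac1{a_2}<\tfrac rd$, such a choice exists for every $0<r<q$ exactly because $\tfrac qd>\tfrac12$, and the resulting exponents $p_i=(\tfrac1{a_i}+\tfrac12)^{-1}$ lie in $[1,2]$, so Young's inequality applies; the endpoints $r=0$ and $r=q$, where one of the weights $\langle\cdot\rangle^{-r}$, $\langle\cdot\rangle^{r-q}$ fails to lie in any $L^{a}$, are correctly dispatched by the Sobolev embedding and the algebra property respectively. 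Two minor points: the motivating remark that for $q<r+\tfrac d2$ ``neither factor is in $L^1$'' is accurate only when additionally $r\le\tfrac d2$ (if $r>\tfrac d2$ then $\mc F g\in L^1$ and plain Young already handles $I_1$), though this does not affect the argument, which covers all $0<r<q$ uniformly; and, strictly speaking, the convolution identity and the ensuing estimates should be established for Schwartz functions and extended by density, a routine step worth one sentence. Your alternative route via complex interpolation of the multiplication operator $M_f$ between $L^2$ and $H^q$ is also correct and is arguably the shortest complete argument, granted the identification $[L^2,H^q]_\theta=H^{\theta q}$.
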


\begin{lemma}[{\cite[Thm.~1.1 and Rem.~1.5]{Inci2013}}]
\label{thm:diff_rd_comp}
Let $q > \frac d2 + 1$ and $k \in \mathbb N$. Then
\[
H^{q+k}(\R^d, \R^n) \times \mc D^q(\R^d) \to H^q(\R^d,\R^n)\,,\quad
(f, \ph) \mapsto f \circ \ph\,,
\]
is a $C^k$-map.
\end{lemma}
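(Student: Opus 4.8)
The plan is to prove the statement by induction on $k$, the essential analytic input being the case $k=0$: that composition $H^q(\R^d,\R^n)\times\mc D^q(\R^d)\to H^q(\R^d,\R^n)$, $(f,\ph)\mapsto f\circ\ph$, is continuous. The inductive step rests on the observation that every derivative of the composition map has an explicit form exhibiting no hidden loss of regularity. Computing the directional derivatives along the line $t\mapsto(f+tg,\ph+th)$, and using that $f(\ph(x)+th(x))$ is affine in $t$ inside the argument of $f$ while the map is linear in its first slot, one finds that $\frac{d^j}{dt^j}\big|_{t=0}(f+tg)\circ(\ph+th)$ is a finite sum of terms $(D^m\theta\circ\ph)\cdot h^{\otimes m}$ with $\theta\in\{f,g\}$ and $m\le j\le k$; in particular no derivatives of the direction field $h$ occur. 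Polarising, the candidate $j$-th derivative $D^jC(f,\ph)\in\mc L^j_{\on{sym}}(H^{q+k}\times H^q,H^q)$ is a finite sum of such terms.

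First I would check that each candidate derivative is continuous as a map $H^{q+k}\times\mc D^q\to\mc L^j_{\on{sym}}(H^{q+k}\times H^q,H^q)$. A term $(D^m f\circ\ph)\cdot(h_1\otimes\cdots\otimes h_m)$ factors as $(f,\ph)\mapsto D^m f\circ\ph\in H^q$ followed by the multilinear evaluation $(w,h_1,\dots,h_m)\mapsto w\cdot h_1\cdots h_m$. The first factor is continuous because $f\mapsto D^m f$ is bounded linear from $H^{q+k}$ into $H^{q+k-m}\hookrightarrow H^q$ (here $m\le k$ is used) and composition is continuous on $H^q\times\mc D^q$ by the base case. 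The evaluation is bounded multilinear into $H^q$ by the pointwise multiplication lemma with $r=q$, so that $H^q$ is an algebra; since bounded multilinear maps are continuous in the operator norm, the composite is continuous.

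To upgrade continuity of the candidate derivatives to genuine differentiability I would invoke Lemma~\ref{lem:extend_diff}: on the dense subspace where $f$ ranges over $H^\infty=\bigcap_s H^s$, the function $f$ is a bounded $C^\infty$-map, the chain rule applies classically, and one checks directly—using the base case together with the reverse Taylor formula of Lemma~\ref{lem:reverse_taylor}—that $(f,\ph)\mapsto f\circ\ph$ is of class $C^k$ with derivatives given by the formulas above. Since these derivatives extend continuously to all of $H^{q+k}\times\mc D^q$ by the previous paragraph, Lemma~\ref{lem:extend_diff} yields that $C$ is $C^k$ on the full space. Alternatively one can climb the induction directly: the first summand $g\mapsto g\circ\ph$ of $DC$ is the operator-valued right translation $\ph\mapsto R_\ph$, whose $C^{k-1}$-regularity follows from a parallel induction of the same shape, while the second summand $h\mapsto(Df\circ\ph)\cdot h$ is $C^{k-1}$ by the inductive hypothesis applied to $Df\in H^{q+k-1}$ together with the multiplication lemma.

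I expect the genuine obstacle to be the base case, the continuity of $(f,\ph)\mapsto f\circ\ph$ on $H^q\times\mc D^q$ when both arguments are merely of Sobolev regularity $H^q$. The natural approach is to first bound $\|f\circ\ph\|_{H^q}\lesssim C(\ph)\,\|f\|_{H^q}$ locally uniformly in $\ph$: the $L^2$-estimate follows from the change of variables $\int|f\circ\ph|^2=\int|f|^2\,|\det D\ph\inv|$ together with the uniform lower bound on $\det D\ph$ coming from $\ph\in\mc D^q\subseteq\on{Diff}^1(\R^d)$ and $\ph\in\on{Id}+H^q$, while the derivative estimates use Fa\`a di Bruno together with the multiplication lemma. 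Continuity then follows by a density argument: smooth $f$ are dense in $H^q$, for smooth $f$ the map $\ph\mapsto f\circ\ph$ is continuous by dominated convergence, and the locally uniform bound transfers this to all $f\in H^q$. The delicate point, and the reason this is the substance of the cited reference, is that for non-integer $q$ the Fa\`a di Bruno computation must be replaced by Fourier-analytic (paraproduct) estimates, so that the $L^2$-change of variables and the fractional derivative count have to be controlled simultaneously.
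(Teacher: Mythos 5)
You should know at the outset that the paper contains no proof of this lemma: it is imported verbatim, with attribution, from Inci--Kappeler--Topalov \cite{Inci2013} (Thm.~1.1 and Rem.~1.5), so the only meaningful comparison is with the proof in that memoir. Against that benchmark your superstructure is essentially the right one and matches what is actually done there: prove continuity of $(f,\ph)\mapsto f\circ\ph$ at the base level $H^q\x\mc D^q$; compute the candidate derivatives, which you identify correctly as finite sums of terms $(D^m\theta\circ\ph)\cdot(h_1\otimes\dots\otimes h_m)$ with $\theta\in\{f,g\}$ and $m\le j\le k$, with no derivatives falling on the directions $h_i$; check boundedness and continuity of these candidates from the multiplication lemma and the base case; and upgrade to genuine $C^k$ regularity by a converse-Taylor-plus-density argument and induction on $k$. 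Those reductions are sound, and the observation that each derivative costs exactly one derivative of $f$ (which is why $H^{q+k}$ in the source space buys $C^k$) is the correct mechanism.

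The genuine gap is the one you flag yourself: the base case. Every other step of your argument reduces to it, so declaring it ``the genuine obstacle'' and sketching it is not a proof but a statement of where the proof is missing. The lemma is asserted for real $q>\frac d2+1$, and for non-integer $q$ the locally uniform bound $\|f\circ\ph\|_{H^q}\lesssim C(\ph)\,\|f\|_{H^q}$ cannot be obtained from the change-of-variables and Fa\`a di Bruno computation you describe; naming ``paraproduct estimates'' does not discharge it, and supplying those estimates (together with the continuity statement built on them) is precisely the content of the citation. A secondary flaw: your appeal to Lemma~\ref{lem:extend_diff} with dense subspace $H^\infty$ does not typecheck, because that lemma requires the dense subspace $E$ to be a Banach space and $H^\infty$ is only Fr\'echet, while any Banach substitute $E=H^{q+k'}$ with $k'>k$ would make the argument circular (you would need the $C^k$ statement on $E$ first). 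The alternative you mention in passing --- verify the integral identity of Lemma~\ref{lem:reverse_taylor} for smooth $f$ by the fundamental theorem of calculus, extend it to all $f\in H^{q+k}$ by density and base-case continuity of both sides, and then iterate --- is the route that works and should be made primary.
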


Now we are ready to state and prove the main theorem for $\R^d$.

\begin{theorem}
\label{thm:main_euclidean}
Let $q > \frac d2 + 1$, $0 \leq l \leq k$ and $F : H^q(\R^d, \R^n) \to H^q(\R^d, \R^m)$ be a $\mc D^q(\R^d)$-equivariant $C^k$-map, i.e. $F(u \circ \ph) = F(u) \circ \ph$. Then $F$ maps $H^{q+l}$ into $H^{q+l}$ and $F: H^{q+l}(\R^d, \R^n) \to H^{q+l}(\R^d, \R^m)$ is a $C^{k-l}$-map.
\end{theorem}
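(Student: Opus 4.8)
The plan is to reduce the statement to a single ``gain one derivative'' step and then iterate. The engine throughout is the identity obtained by differentiating the equivariance relation in the \emph{group} variable. Fix $u \in H^{q+1}(\R^d,\R^n)$ and a vector field $X \in H^q(\R^d,\R^d)$, and let $t \mapsto \ph_t \in \mc D^q(\R^d)$ be a $C^1$-curve with $\ph_0 = \on{Id}$ and $\dot\ph_0 = X$. Since $u \in H^{q+1}$, Lem.~\ref{thm:diff_rd_comp} makes $t \mapsto u \o \ph_t$ a $C^1$-curve in $H^q$ with derivative $Du \cdot X$ at $t=0$, so differentiating $F(u \o \ph_t) = F(u) \o \ph_t$ and using that $F$ is $C^1$ on $H^q$ yields
\[
D(F(u)) \cdot X = DF(u).(Du \cdot X)\,.
\]
The right-hand side lies in $H^q$ and depends boundedly and linearly on $X$ (by the multiplication lemma and $DF(u) \in \mc L(H^q,H^q)$), while the left-hand side is the pointwise directional derivative of $F(u) \in H^q \subseteq C^1$. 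This is the identity that lets us trade map-regularity for spatial regularity.

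First I would isolate a one-step lemma: if $F : H^s \to H^s$ is $\mc D^s$-equivariant and $C^j$ with $s > \frac d2 + 1$ and $j \geq 1$, then $F$ maps $H^{s+1}$ into $H^{s+1}$ and $F : H^{s+1} \to H^{s+1}$ is $C^{j-1}$. For well-definedness, take $X = h\,e_i$ in the identity above, giving $h\,\p_i F(u) = DF(u).(h\,\p_i u) \in H^s$ for every scalar $h$. Choosing $h = \eta_j$ a bounded-overlap partition of unity on $\R^d$, the decay of $\p_i u \in H^s$ gives $\sum_j \| \eta_j\, \p_i u \|_{H^s}^2 \lesssim \|u\|_{H^{s+1}}^2$, hence $\sum_j \| \eta_j\, \p_i F(u)\|_{H^s}^2 \lesssim \|DF(u)\|^2 \|u\|_{H^{s+1}}^2 < \infty$, and the partition-of-unity characterisation of $H^s$ yields $\p_i F(u) \in H^s$, i.e. $F(u) \in H^{s+1}$, with a norm bound. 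The same estimate applied to differences should give continuity of $F : H^{s+1} \to H^{s+1}$.

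For the differentiability part I would exploit that the derivatives of $F$ are themselves equivariant: differentiating $F(u \o \ph) = F(u) \o \ph$ in $u$ gives $DF(u \o \ph).(v \o \ph) = (DF(u).v) \o \ph$, and likewise for the higher $D^i F$. Thus each $D^i F$ is a $\mc D^s$-equivariant (multilinear) object to which the gain-of-regularity above applies, so that $DF(u)$ restricts to a bounded operator $H^{s+1} \to H^{s+1}$ depending continuously on $u$. Having identified the candidate derivatives on the dense subspace of smooth functions, where all compositions are as regular as needed, I would feed them into Lem.~\ref{lem:extend_diff}: its hypotheses are exactly that $F$ and its first $j-1$ derivatives extend continuously in the $H^{s+1}$-operator norms, and its conclusion is that $F : H^{s+1} \to H^{s+1}$ is $C^{j-1}$. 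Finally, Thm.~\ref{thm:main_euclidean} follows by induction on $l$: applying the one-step lemma to the $C^{k-l}$-map $F : H^{q+l} \to H^{q+l}$ produces the $C^{k-l-1}$-map $F : H^{q+l+1} \to H^{q+l+1}$, using that $\mc D^q$-equivariance restricts to $\mc D^{q+l}$-equivariance since $\mc D^{q+l} \subseteq \mc D^q$.

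The step I expect to be the main obstacle is the differentiability bookkeeping: making precise that passing to $H^{s+1}$ costs exactly one order of differentiability and that the gained derivatives are genuinely continuous in the stronger operator-norm topology. The equivariance of the $D^i F$ reduces this to repeated use of the same mechanism, but organising the induction so that Lem.~\ref{lem:extend_diff} applies cleanly, and verifying continuity of the extended derivatives, is where the real work lies. The $\R^d$-specific point --- that there are no constant vector fields in $H^q$, so the spatial derivative must be recovered from localised directions $X = h\,e_i$ --- is handled by the partition-of-unity argument above and is technical rather than conceptual.
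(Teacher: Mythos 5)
Your overall architecture coincides with the paper's: differentiate the equivariance identity along flows of localised fields $\eta_j e_i$ to obtain $D(F(u)).X = DF(u).(Du.X)$, sum over a partition of unity to gain one spatial derivative (this is the paper's Step 1, with the equivalent norm \eqref{eq:norm_Rd} playing the role of your square-function characterisation), exploit the equivariance of the derivatives $D^iF$, and convert continuous extensions of the derivatives into genuine differentiability via Lem.~\ref{lem:extend_diff}. Your organisation differs in a minor but pleasant way: you iterate a one-step lemma over $l$, using that $\mc D^{q+l}$-equivariance is inherited from $\mc D^q$-equivariance, whereas the paper proves the endpoint cases ($C^1$ on $H^{q+k-1}$, $C^0$ on $H^{q+k}$) by induction on $k$ in its Step 2, carrying the explicit estimates \eqref{eq:Fuv_Hqk} and \eqref{eq:DFv_Hqkm1}, and then interpolates in Step 3 by applying Step 2 to the maps $D^jF$. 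Your level-by-level scheme only ever needs the one-derivative estimates \eqref{eq:Fu_Hq1} and \eqref{eq:Fuv_Hq1} at each level, so it is, if anything, lighter on bookkeeping; your use of an arbitrary $C^1$-curve $\ph_t$ in place of the one-parameter subgroups $\on{exp}(t\om_i h_j)$ is also legitimate, since equivariance holds for every diffeomorphism.

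There is, however, a genuine gap at exactly the point you flag. You claim that because each $D^iF$, viewed as the equivariant map $(u,v_1,\dots,v_i)\mapsto D^iF(u).(v_1,\dots,v_i)$, is subject to the gain-of-regularity step, ``$DF(u)$ restricts to a bounded operator $H^{s+1}\to H^{s+1}$ depending continuously on $u$''. What the gain-of-regularity step actually yields for this map is \emph{joint} continuity $H^{s+1}\x\dots\x H^{s+1}\to H^{s+1}$; what Lem.~\ref{lem:extend_diff} requires is continuity of $u\mapsto D^iF(u)$ in the \emph{operator norm} of $\mc L^i(H^{s+1},H^{s+1})$, which is strictly stronger (joint continuity plus linearity gives only local boundedness of the operator norms). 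The distinction is not pedantic here: it is precisely the pathology this subject is built on, since $(\ph,u)\mapsto u\o\ph$ is jointly continuous on $\mc D^q\x H^q$ while $\ph\mapsto(u\mapsto u\o\ph)$ is not norm-continuous --- already for translations on $\R^d$ the operator norm of $T_a - T_b$ on $H^q$ equals $2$ whenever $a\neq b$. To close the gap one must run the difference estimate \eqref{eq:Fuv_Hq1} for $D^iF$ at the two points $(u,v_1,\dots,v_i)$ and $(u',v_1,\dots,v_i)$ and observe that, by linearity in the $v_a$ and the Sobolev multiplication lemma, every term on the right-hand side is bounded uniformly over $\|v_a\|_{H^{s+1}}\leq 1$ by operator norms of $D^iF,D^{i+1}F$ at the $H^s$-level and by $\|u-u'\|_{H^{s+1}}$; taking the supremum over the $v_a$ then gives norm continuity. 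This uniformity-in-the-linear-slots argument is exactly the content of the paper's estimates \eqref{eq:Fuv_Hqk} and \eqref{eq:DFuv_Hqk} (``here we have used the module property of Sobolev spaces''), and once it is inserted your one-step lemma, and with it the induction on $l$, goes through.
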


\begin{proof}
The proof is split into several steps.

{\bfseries Step 1.} If $F:H^q \to H^q$ is $C^1$, then $F: H^{q+1} \to H^{q+1}$ is $C^0$. \\
Let $(\Om_i)_{i \in \mathbb N}$ be a uniformly localy finite cover of $\R^d$ by open balls, $(\om_i)_{i \in \mathbb N}$ a subordinate smooth partition of unity with uniformly (in $i$) bounded derivatives, and $h_1, \dots, h_d$ a basis of $\R^d$. Then an equivalent norm for $H^q(\R^d,\R^n)$ is given by
\begin{equation}
\label{eq:norm_Rd}
\|u \|_{H^q} \sim \| u \|_{H^{q-1}} + \sum_{j=1}^d \sum_{i \in \mathbb N} \| \om_i Du.h_j\|_{H^{q-1}}\,;
\end{equation}
see \cite[Sect.~7.2.2]{Triebel1992}.

Let $\ps_t = \on{exp}(t. \om_i. h_j)$ be the one-parameter subgroup generated by $\om_i.h_j$, i.e., $\ps_t$ satisfies the ODE $\p_t \ps_t = \om_i.h_j \circ \ps_t$. The existence of $\ps_t$ -- nontrivial since $\mc D^q(\R^d)$ is not a Lie group -- is shown for example in \cite[Thm.~4.4]{Bruveris2014_preprint}. Then by Lem.~\ref{thm:diff_rd_comp} the map
\[
\R \x H^{q+1} \to H^q\,,\quad(t, u) \mapsto u \circ \ps_t
\]
is $C^1$. Now fix $u \in H^{q+1}$ and consider the identity $F(u \circ \ps_t) = F(u) \circ \ps_t$. Both sides of the identity are $C^1$ as maps $\R \to H^{q-1}$ and by differentiating at $t=0$ we obtain the identity
\[
DF(u).Du.\om_i h_j = D\left(F(u)\right). \om_i h_j\,,
\]
We can estimate the $H^q$-norm of the right hand side via the left hand side,
\[
\| \om_i D\left(F(u)\right).h_j\|_{H^q} \leq \|DF(u)\|_{\mc L(H^q, H^q)} \| \om_i Du.h_j\|_{H^q}\,,
\]
and hence we see using the equivalent $H^q$-norm \eqref{eq:norm_Rd} that
\begin{equation}
\label{eq:Fu_Hq1}
\| F(u) \|_{H^{q+1}} \lesssim \| F(u) \|_{H^q} + \| DF(u) \|_{\mc L(H^q, H^q)}
\| u \|_{H^{q+1}} \,.
\end{equation}
This shows that $F(u) \in H^{q+1}$, provided $u \in H^{q+1}$. Regarding continuity we can show in the same manner the estimate
\begin{multline}
\label{eq:Fuv_Hq1}
\| F(u) -F(v) \|_{H^{q+1}} \lesssim  \| F(u) - F(v) \|_{H^q} + \\ +
\| DF(u) -DF(v) \|_{\mc L(H^q, H^q)} \| u \|_{H^{q+1}} +  \| DF(v) \|_{\mc L(H^q, H^q)} \| u -v\|_{H^{q+1}}
\end{multline}
for $u, v \in H^{q+1}$, from which the continuity of $F: H^{q+1} \to H^{q+1}$ follows.

{\bfseries Step 2.} If $F : H^q \to H^q$ is a $C^k$-map, then $F : H^{q+k-1} \to H^{q+k-1}$ is $C^1$ and $F : H^{q+k} \to H^{q+k}$ is $C^0$. \\
We will show this together with the explicit estimates
\begin{align}
\label{eq:Fuv_Hqk}
  \| F(u) - F(v) \|_{H^{q+k}} &\lesssim \sum_{j=0}^k \| D^j F(u) - D^j F(v) \|_{\mc L^j(H^{q}, H^{q})}  +{} \\
\notag
&\qquad\qquad {}+\| D^j F(v) \|_{\mc L^j(H^q,H^q)} \| u - v \|_{H^{q+k}}\\
\label{eq:DFv_Hqkm1}
\| DF(v) \|_{\mc L(H^{q+k-1}, H^{q+k-1})} &\lesssim
\sum_{j=0}^k \| D^j F(v)\|_{\mc L^j(H^q,H^q)}\,,
\end{align}
which are valid for $u, v$ in a bounded $H^{q+k}$-ball, using induction on $k$. 

For $k=1$ this is step 1. Now assume the statement has been shown for $k$ and let $F : H^q \to H^q$ be $C^{k+1}$. Then $DF \in C^k(H^q, \mc L(H^q, H^q))$ and thus also $DF \in C^k(H^q \x H^q, H^q)$. Since $H^q(\R^d, \R^n) \x H^q(\R^d, \R^n) \cong H^q(\R^d, \R^{2n})$, we obtain by induction that $DF \in C^1(H^{q+k-1}, H^{q+k-1})$ and $DF \in C(H^{q+k} \x H^{q+k}, H^{q+k})$. We now take $v, w \in H^{q+k}$ lying in a bounded $H^{q+k}$-ball and apply~\eqref{eq:Fu_Hq1} to obtain
\begin{align*}
\| DF(v).w \|_{H^{q+k}} 
&\lesssim \| DF(v).w \|_{H^{q+k-1}} + \| D^2F(v).w \|_{\mc L(H^{q+k-1}, H^{q+k-1})} \\
&\lesssim \| DF(v) \|_{\mc L(H^{q+k-1}, H^{q+k-1})} + 
\sum_{j=0}^{k} \| D^{j+1} F(v).w\|_{\mc L^j(H^q,H^q)} \\
&\lesssim \sum_{j=0}^{k+1} \| D^j F(v)\|_{\mc L^j(H^q,H^q)}\,.
\end{align*}
From this we obtain
\[
\| DF(v) \|_{\mc L(H^{q+k},H^{q+k})}
\lesssim \sum_{j=0}^{k+1} \| D^j F(v)\|_{\mc L^j(H^q,H^q)}\,,
\]
which completes the induction for~\eqref{eq:DFv_Hqkm1}.

 The induction assumption~\eqref{eq:Fuv_Hqk} applied to the $C^k$-map $DF$ shows that for $u, v, w \in H^{q+k}$, lying in a bounded $H^{q+k}$-ball,
\begin{align*}
\| DF(u).w &- DF(v).w \|_{H^{q+k}} \lesssim \\ 
& \lesssim \sum_{j=0}^k \| D^{j+1} F(u).w - D^{j+1} F(v).w \|_{\mc L^j(H^q, H^q)} \\
&\qquad\qquad {}+ \| D^{j+1} F(u).w \|_{\mc L^{j}(H^q,H^q)} \| u - v \|_{H^{q+k}} \\
& \lesssim \sum_{j=1}^{k+1} \| D^{j} F(u) - D^{j} F(v) \|_{\mc L^{j}(H^q, H^q)}
+ \| D^{j} F(v)\|_{\mc L^j(H^q,H^q)} \| u-v\|_{H^{q+k}} \,;
\end{align*}
here we have used the module property of Sobolev spaces. Therefore $DF \in C(H^{q+k}, \mc L(H^{q+k}, H^{q+k}))$ with
\begin{multline}
\label{eq:DFuv_Hqk}
\| DF(u) - DF(v) \|_{\mc L(H^{q+k}, H^{q+k})} \lesssim \\
\lesssim \sum_{j=1}^{k+1} \| D^{j} F(u) - D^{j} F(v) \|_{\mc L^{j}(H^q, H^q)}
+ \| D^{j} F(v)\|_{\mc L^j(H^q,H^q)} \| u-v\|_{H^{q+k}}
\end{multline}
Since also $F \in C^k(H^q, H^q)$, by induction $F \in C(H^{q+k}, H^{q+k})$. Now we can apply Lem.~\ref{lem:extend_diff} to obtain that $F \in C^1(H^{q+k}, H^{q+k})$ and hence by step 1, we have $F \in C(H^{q+k+1}, H^{q+k+1})$ together with the estimate,
\begin{align*}
\| F(u) -F(v) \|_{H^{q+k+1}} &\lesssim
\| F(u) -F(v) \|_{H^{q+k}} + \| DF(u) -DF(v) \|_{\mc L(H^{q+k}, H^{q+k})} \\
&\qquad\qquad+ \| DF(v) \|_{\mc L(H^{q+k}, H^{q+k})} \| u - v \|_{H^{q+k+1}}\,.
\end{align*}
Now we combine the induction assumption~\eqref{eq:Fuv_Hqk} and the estimates~\eqref{eq:DFuv_Hqk} and~\eqref{eq:DFv_Hqkm1} to obtain
\begin{align*}
\| F(u) -F(v) \|_{H^{q+k+1}} &\lesssim
\sum_{j=0}^{k+1} \| D^j F(u) - D^j F(v) \|_{\mc L^j(H^{q}, H^{q})}  +{} \\
\notag
&\qquad\qquad {}+\| D^j F(v) \|_{\mc L^j(H^q,H^q)} \| u - v \|_{H^{q+k+1}}\,.
\end{align*}
This concludes the induction.

{\bfseries Step 3.} If $F : H^q \to H^q$ is $C^k$, then $F : H^{q+l} \to H^{q+l}$ is $C^{k-l}$. \\
The case $l=0$ is trivial and the case $l=k$ was proven in step 2. Now let $1 \leq l \leq k$. We consider $D^j F$ as a map $D^j F: H^q(\R^d, \R^{(j+1)n}) \to H^q(\R^d, \R^m)$. Then the maps
\[
F, DF, D^2 F, \dots, D^{k-l} F: H^q \to H^q \text{ are at least $C^l$}\,.
\]
Thus by step 2 we have
\[
F, DF, D^2 F, \dots, D^{k-l} F : H^{q+l} \to H^{q+l} \text{ are $C^0$}\,,
\]
and we have the additional inequality for $0 \leq j \leq k-l$ and $u, v$ in a bounded $H^{q+l}$-ball,
\begin{align*}
  \| D^j F(u) - D^j F(v) \|_{\mc L^j(H^{q+l},H^{q+l})} &\lesssim \sum_{j=0}^l \| D^{l+j} F(u) - D^{l+j} F(v) \|_{\mc L^{l+j}(H^{q}, H^{q})}  +{} \\
&\qquad\qquad {}+\| D^{l+j} F(v) \|_{\mc L^{l+j}(H^q,H^q)} \| u - v \|_{H^{q+l}}\,,
\end{align*}
which shows $D^j F \in C(H^{q+l}, \mc L^j(H^{q+l}, H^{q+l}))$. Thus by Lem.~\ref{lem:extend_diff} we obtain $F \in C^{k-l}(H^{q+l}, H^{q+l})$. This concludes the proof.
\end{proof}

\begin{remark}
All the arguments in the proof of the theorem are local in nature, i.e. the one-parameter subgroups $\on{exp}(t.\om_i.h_j)$ are only considered around $t=0$, the estimates for $\| F(u) - F(v) \|_{H^{q+k}}$ and $\| DF(v) \|_{H^{q+k-1}}$ are only required to hold for $u,v$ in bounded balls and the statement about differentiability itself is local. Hence the theorem can also be proven for functions defined on open subsets of Sobolev spaces.
\end{remark}

\begin{corollary}
Let $q > \frac d2 + 1$, $0 \leq l \leq k$, $V \subseteq H^q(\R^d,\R^n)$ an open subset and $F : V \to H^q(\R^d, \R^m)$ a $\mc D^q(\R^d)$-equivariant $C^k$-function, i.e. $F(u \circ \ph) = F(u) \circ \ph$ for $q \in \mc D^q(\R^d)$, whenever $u, u \circ \ph \in V$. Then $F$ maps $U = V \cap H^{q+l}$ into $H^{q+l}$ and $F: U \to H^{q+l}(\R^d, \R^m)$ is a $C^{k-l}$-map.
\end{corollary}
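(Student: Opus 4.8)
The plan is to retrace the three steps in the proof of Thm.~\ref{thm:main_euclidean} and verify that each of them localises to the open set $V$, exactly as anticipated in the remark preceding this corollary. The only point at which the proof of Thm.~\ref{thm:main_euclidean} invoked the \emph{global} equivariance of $F$ was the differentiation of the identity $F(u \circ \ps_t) = F(u) \circ \ps_t$ at $t = 0$ in Step~1; everything downstream of that identity consists of estimates on bounded balls and applications of Lem.~\ref{lem:extend_diff}, all of which are already local statements. Hence the crux -- and the only genuine obstacle -- is to make the equivariance identity available on $V$, where it is assumed to hold only when both $u$ and $u \circ \ph$ lie in $V$.

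I would resolve this as follows. Fix $u \in U = V \cap H^{q+1}$ and let $\ps_t = \exp(t.\om_i.h_j)$ be the one-parameter subgroup used in Step~1. By Lem.~\ref{thm:diff_rd_comp} the curve $t \mapsto u \circ \ps_t$ is continuous (indeed $C^1$) into $H^q$ and equals $u$ at $t = 0$; since $V$ is open there is an $\ep > 0$ with $u \circ \ps_t \in V$ for $|t| < \ep$. On this range of $t$ the hypothesis applies and gives $F(u \circ \ps_t) = F(u) \circ \ps_t$, which is all that is needed to differentiate at $t = 0$ and derive the pointwise bound~\eqref{eq:Fu_Hq1}. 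The two-point estimate~\eqref{eq:Fuv_Hq1} follows in the same way for $v$ in a small enough $H^{q+1}$-ball around $u$, so that $u \circ \ps_t$ and $v \circ \ps_t$ both remain in $V$ for small $t$; this yields $F(U) \subseteq H^{q+1}$ and continuity of $F : U \to H^{q+1}$.

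For Steps~2 and~3 the key observation is that $DF$, regarded as the map $DF : V \x H^q \to H^q$, $(u,w) \mapsto DF(u).w$, is again a $\mc D^q(\R^d)$-equivariant $C^k$-map on the open set $V \x H^q \subseteq H^q(\R^d, \R^{2n})$: differentiating the equivariance identity in $u$ gives $DF(u \circ \ph).(w \circ \ph) = (DF(u).w) \circ \ph$, valid whenever $u, u \circ \ph \in V$, which is precisely equivariance under the diagonal action. Thus the induction hypothesis of Step~2 may be applied to $DF$ verbatim, and the estimates~\eqref{eq:Fuv_Hqk} and~\eqref{eq:DFv_Hqkm1} persist for $u,v$ in a bounded $H^{q+l}$-ball inside $U$. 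Lem.~\ref{lem:extend_diff} is already formulated for a domain of the form $U = V \cap E$ with $V$ open, so it applies without modification, and Step~3 is a pure recombination of these estimates. The conclusion $F \in C^{k-l}(U, H^{q+l})$ then follows exactly as in the theorem.
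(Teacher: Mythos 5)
Your proposal is correct and follows essentially the same route as the paper, whose ``proof'' of this corollary is precisely the preceding remark that every argument in Thm.~\ref{thm:main_euclidean} is local: the one-parameter subgroups $\exp(t.\om_i.h_j)$ are only used near $t=0$ (so openness of $V$ suffices for the equivariance identity), the estimates are only needed on bounded balls, and differentiability is a local property. In fact you supply two details the paper leaves implicit -- that $u\circ\ps_t$ stays in $V$ for small $t$ by continuity of $t\mapsto u\circ\ps_t$ into $H^q$, and that $DF$ inherits $\mc D^q(\R^d)$-equivariance under the diagonal action on $V\x H^q$ so the induction of Step~2 applies -- so nothing further is needed.
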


\section{$\mc D^q(M)$-equivariant Maps}

In this section we assume that
$M$ is a $d$-dimensional compact manifold and $N, P$ are $n$- and $m$-di\-men\-sio\-nal manifolds respectively, both without boundary.

To define the spaces $H^q(M,N)$ we require $q > \frac d2$. A continuous map $f:M\to N$ belongs to $H^q(M,N)$, if around each point $x \in M$, there exists a chart $\ch : \mc U \to U\subseteq \R^d$ of $M$ and a chart $\et : \mc V \to V\subseteq \R^n$ of $N$ around $f(x)$, such that $\et\circ f \circ\ch\inv \in H^q(U,\R^n)$. When $N = \R^n$, then $H^q(M,\R^n)$ is the Sobolev space of functions on a manifold and the condition $q > \frac d2$ is not necessary; see \cite{Aubin1998}. In general $H^q(M,N)$ is not a vector space, but $H^q(M,N)$ can be given the structure of a $C^\infty$-smooth Hilbert manifold; this was done first in \cite{Eells1966, Palais1968} and a different but compatible differentiable structure is described in \cite{Inci2013}.

For $q > \frac d2+1$ the diffeomorphism group $\mc D^q(M)$ can be defined by
\begin{align*}
\mc D^q(M) &= \{ \ph \in H^q(M, M) \,:\, \ph \text{ bijective, }
\ph\inv \in H^q(M, M) \} \\
&= \{ \ph \in H^q(M, M) \,:\, 
\ph \in \on{Diff}^1_+(M) \}\,, 
\end{align*}
and $\on{Diff}^1(M)$ denotes the group of $C^1$-diffeomorphisms of $M$. The diffeomorphism group is an open subset of $H^q(M,M)$ and it is a topological group.

We will need the following result on the boundedness of pointwise multiplications and the regularity of the composition map.

\begin{lemma}[{\cite[Lem.~2.13 and Sect.~3]{Inci2013}}]
Let $q, r \in \R$ with $q > \frac d2$ and $0 \leq r \leq q$. Then pointwise multiplication
\[
H^q(M, \R) \times H^r(M,\R) \to H^r(M,\R)\,,\quad
(f, g) \mapsto f \cdot g\,,
\]
is a bounded bilinear map.
\end{lemma}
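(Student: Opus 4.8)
The plan is to reduce the statement on the compact manifold $M$ to the Euclidean multiplication lemma quoted above for $\R^d$, by localising with a finite atlas and a partition of unity. First I would fix a finite atlas $(\mc U_\al, \ch_\al)_\al$ of $M$ (finite since $M$ is compact), with $\ch_\al : \mc U_\al \to U_\al \subseteq \R^d$, a subordinate smooth partition of unity $(\rh_\al)$ with $\on{supp} \rh_\al \subset \mc U_\al$, and for each $\al$ a second cutoff $\ze_\al \in C^\infty(M)$ with $\on{supp}\ze_\al \subset \mc U_\al$ and $\ze_\al \equiv 1$ on $\on{supp}\rh_\al$. Writing $f_\al = f \circ \ch_\al\inv$ for the local representatives and $\wt\rh_\al = \rh_\al\circ\ch_\al\inv$, $\wt\ze_\al = \ze_\al\circ\ch_\al\inv$, the chart-based definition of the Sobolev norm gives the equivalence
\[
\|f\|_{H^q(M)} \sim \sum_{\al} \| \wt\rh_\al \cdot f_\al \|_{H^q(\R^d)}\,,
\]
where each summand is compactly supported in $U_\al$ and extended by zero to all of $\R^d$; the analogous equivalence holds for $H^r$.

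The key step is the local factorisation. Since $\wt\ze_\al \equiv 1$ on $\on{supp}\wt\rh_\al$, for the product $fg$ one has the pointwise identity on $\R^d$
\[
\wt\rh_\al \cdot (fg)_\al = (\wt\rh_\al f_\al)\cdot(\wt\ze_\al g_\al)\,,
\]
since both sides vanish off $\on{supp}\wt\rh_\al$ and agree on it. Here $\wt\rh_\al f_\al$ extends by zero to an element of $H^q(\R^d)$ and $\wt\ze_\al g_\al$ to an element of $H^r(\R^d)$. Applying the Euclidean multiplication lemma — valid because $q > d/2$ and $0 \le r \le q$ — gives
\[
\|\wt\rh_\al (fg)_\al\|_{H^r(\R^d)} \lesssim \|\wt\rh_\al f_\al\|_{H^q(\R^d)}\,\|\wt\ze_\al g_\al\|_{H^r(\R^d)}\,.
\]

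It remains to bound the two factors by the manifold norms. The first is controlled by $\|f\|_{H^q(M)}$ directly from the norm equivalence above. For the second, $\wt\ze_\al g_\al = (\ze_\al g)\circ\ch_\al\inv$ is the single-chart representative of $\ze_\al g \in H^r(M)$; using that multiplication by the fixed smooth compactly supported $\ze_\al$ is bounded on $H^r(M)$ and that the representative in one chart of a function supported in $\mc U_\al$ is controlled by the $H^r(M)$-norm, one gets $\|\wt\ze_\al g_\al\|_{H^r(\R^d)} \lesssim \|g\|_{H^r(M)}$. Summing the resulting bound $\|\wt\rh_\al (fg)_\al\|_{H^r} \lesssim \|f\|_{H^q(M)}\|g\|_{H^r(M)}$ over the finitely many charts and invoking the $H^r(M)$ norm equivalence yields $\|fg\|_{H^r(M)} \lesssim \|f\|_{H^q(M)}\|g\|_{H^r(M)}$, which is the claim.

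I expect the main obstacle to be the bookkeeping of the asymmetric localisation rather than any deep estimate: the point is to keep the $H^q$-regularity on the factor carrying $\wt\rh_\al$ and to use only $H^r$-regularity on the companion factor $\wt\ze_\al g_\al$, so that the Euclidean lemma is applied with its genuine order pairing $(q,r)\mapsto r$. The cutoff $\ze_\al \equiv 1$ on $\on{supp}\rh_\al$ is precisely what rewrites the local product $\wt\rh_\al f_\al g_\al$ as a product of two globally defined, compactly supported Sobolev functions of the correct orders. One must also verify that the chart-based norm is independent of the choices up to equivalence and that multiplication by fixed smooth compactly supported functions is bounded on $H^r(\R^d)$ and $H^r(M)$ for $0 \le r \le q$; both are standard for compact $M$.
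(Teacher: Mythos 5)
The paper does not prove this lemma at all --- it is quoted verbatim from Inci--Kappeler--Topalov \cite[Lem.~2.13 and Sect.~3]{Inci2013}, so there is no in-paper proof to compare against. Your localisation argument is correct and is essentially the standard route (and the one taken in the cited reference, where Sobolev spaces on $M$ are handled through chart-wise norms): the asymmetric pair of cutoffs, with $\ze_\al \equiv 1$ on $\on{supp}\rh_\al$, is exactly the right device, since it turns the local product $\wt\rh_\al f_\al g_\al$ into a product of two globally defined compactly supported functions of orders $q$ and $r$, to which the Euclidean lemma applies with its genuine pairing $(q,r)\mapsto r$. Two of the facts you defer deserve to be flagged as where the residual work sits. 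First, the bound $\|\wt\ze_\al g_\al\|_{H^r(\R^d)} \lesssim \|g\|_{H^r(M)}$ requires writing $\ze_\al g = \sum_\be \rh_\be \ze_\al g$ and transporting each piece into the chart $\al$ via the transition map $\ch_\be \circ \ch_\al\inv$; this uses invariance of the $H^r(\R^d)$-norm of compactly supported functions under composition with a fixed smooth diffeomorphism, which for \emph{fractional} $r$ (allowed here, since $r \in \R$) is a genuine theorem rather than a chain-rule computation --- standard, but not a consequence of the two lemmas quoted in the paper. Second, boundedness of multiplication by a fixed smooth compactly supported function on $H^r(\R^d)$ actually follows from the quoted Euclidean lemma itself (such a cutoff lies in $H^s(\R^d)$ for every $s$, so take $s > d/2$ large), so that ingredient is free. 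With those two points made explicit, your proof is complete and matches the standard treatment.
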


\begin{lemma}[{\cite[Thm.~1.2 and Rem.~1.5]{Inci2013}}]
\label{thm:diff_M_comp}
Let $q > \frac d2 + 1$ and $k \in \mathbb N$. Then
\[
H^{q+k}(M, N) \times \mc D^q(M) \to H^q(M,N)\,,\quad
(f, \ph) \mapsto f \circ \ph\,,
\]
is a $C^k$-map.
\end{lemma}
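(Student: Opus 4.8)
The plan is to reduce the statement to its Euclidean counterpart, Lemma~\ref{thm:diff_rd_comp}, by localising both in the target manifold $N$ and in the base $M$. First I would linearise the target: by the Whitney embedding theorem fix a closed embedding $\iota : N \hookrightarrow \R^K$. The induced maps $\iota_* : H^q(M,N) \to H^q(M,\R^K)$ and $\iota_* : H^{q+k}(M,N) \to H^{q+k}(M,\R^K)$, $f \mapsto \iota \o f$, are smooth embeddings onto splitting submanifolds. Since $\iota_*(f \o \ph) = (\iota \o f) \o \ph$ and differentiability of a map into a submanifold is detected after composing with the inclusion, it suffices to prove that $(g,\ph) \mapsto g \o \ph$ is a $C^k$-map $H^{q+k}(M,\R^K) \x \mc D^q(M) \to H^q(M,\R^K)$. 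This removes the nonlinearity of $N$ and leaves a map whose target is a genuine Banach space of sections.

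Next I would localise in $M$. Fix $(g_0,\ph_0)$. Because $H^q \hookrightarrow C^1$ and $M$ is compact, I can choose a finite atlas $(\mc W_b, \th_b)$ covering the source copy of $M$, together with charts $(\mc U_b, \ch_b)$ on the target copy, so that $\ph_0(\mc W_b) \subseteq \mc U_b$ for every $b$ and, after shrinking, the same containment persists uniformly for all $\ph$ in an $H^q$-neighbourhood of $\ph_0$. Picking a partition of unity $(\si_b)$ subordinate to $(\mc W_b)$, the norm $\| g \o \ph\|_{H^q}$ is equivalent to $\sum_b \| (\si_b \o \th_b\inv)\,(\tilde g_b \o \tilde\ph_b)\|_{H^q(\R^d,\R^K)}$, where $\tilde g_b = g \o \ch_b\inv$ is a local $H^{q+k}$-representative of $g$ and $\tilde\ph_b = \ch_b \o \ph \o \th_b\inv$ is the local representative of $\ph$, a Sobolev diffeomorphism onto its image. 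Each local representative depends on $(g,\ph)$ through restriction and composition with fixed charts: operations that are bounded linear in $g$ and smooth in $\ph$.

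It then remains to invoke the Euclidean result. On each chart the map $(\tilde g_b, \tilde\ph_b) \mapsto \tilde g_b \o \tilde\ph_b$ is exactly of the type handled by Lemma~\ref{thm:diff_rd_comp}, hence $C^k$ from $H^{q+k} \x \mc D^q$ into $H^q$; multiplication by the fixed cutoff $\si_b \o \th_b\inv$ is bounded by the module property, and summing over the finite atlas preserves $C^k$-regularity. The structural reason the loss is exactly $k$ derivatives is already visible here: each differentiation in the $\ph$-direction produces a factor $T\tilde g_b \o \tilde\ph_b$, then $T^2\tilde g_b \o \tilde\ph_b$, and so on, so that taking $k$ derivatives requires $\tilde g_b \in H^{q+k}$ for the resulting terms to land in $H^q$.

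The main obstacle is the localisation in the second step: the base point of the composition moves with $\ph$, so I must ensure that the adapted cover $\ph_0(\mc W_b) \subseteq \mc U_b$ remains valid uniformly over a whole $H^q$-neighbourhood of $\ph_0$, and that Lemma~\ref{thm:diff_rd_comp}, which is stated for maps on all of $\R^d$, can be applied to representatives living only on chart domains and to $\tilde\ph_b$ that are merely diffeomorphisms onto their images. Both difficulties are resolved by working with compactly supported representatives, extending $(\si_b \o \th_b\inv)\,\tilde g_b$ by zero, and by shrinking the neighbourhood of $\ph_0$ so that each $\tilde\ph_b$ restricts to a diffeomorphism of a fixed Euclidean ball containing the support of the relevant cutoff.
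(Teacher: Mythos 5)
First, a point of comparison: the paper itself offers no proof of this lemma --- it is quoted verbatim from \cite{Inci2013} --- so your attempt can only be measured against the cited reference, whose strategy is in fact the one you outline: linearise the target by a Whitney embedding, localise over a finite atlas adapted to $\ph_0$, and reduce to the Euclidean composition result. Your first step (detecting differentiability through the inclusion of the split submanifold $H^q(M,N)\subseteq H^q(M,\R^K)$) and your norm localisation via a partition of unity are sound. The genuine gap is at the decisive third step, where you assert that on each chart the pair $(\tilde g_b,\tilde\ph_b)$ is ``exactly of the type handled by Lemma~\ref{thm:diff_rd_comp}''. It is not: that lemma requires $\ph\in\mc D^q(\R^d)$, i.e.\ a \emph{global} bijection of $\R^d$ with $\ph-\on{Id}\in H^q(\R^d,\R^d)$, whereas $\tilde\ph_b=\ch_b\circ\ph\circ\th_b\inv$ is defined only on a chart domain, is a diffeomorphism onto its image (not onto any fixed ball), and has no decay to the identity.

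Your proposed repair --- shrinking the neighbourhood of $\ph_0$ so that each $\tilde\ph_b$ ``restricts to a diffeomorphism of a fixed Euclidean ball'' --- does not close this gap: no shrinking makes $\tilde\ph_b$ globally defined or of the form $\on{Id}+H^q$, and the naive globalisation $\on{Id}+\rho\,(\tilde\ph_b-\on{Id})$ with a cutoff $\rho$ fails to be injective, since $\tilde\ph_b-\on{Id}$ is of the size of the chart transition, not small. What is missing is an extension construction with controlled dependence on $\ph$. One way: fix \emph{one} extension $\Ph_{0,b}\in\mc D^q(\R^d)$ of the base-point representative $\tilde\ph_{0,b}$ near the relevant compact set --- itself nontrivial; it can be arranged by adapting the source charts to $\ph_0$, taking $\th_b=\ch_b\circ\ph_0$ so that $\tilde\ph_{0,b}=\on{id}$, at the price of working with $H^q$-charts and invoking boundedness of right composition by the fixed $\ph_0$ --- and then set $\hat\ph_b=\Ph_{0,b}+E\bigl(\rho\,(\tilde\ph_b-\tilde\ph_{0,b})\bigr)$, with $E$ extension by zero. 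This $\hat\ph_b$ is affine in $\ph$, coincides with $\tilde\ph_b$ where $\rho=1$, and lies in $\mc D^q(\R^d)$ for $\ph$ in a small $H^q$-neighbourhood of $\ph_0$, by openness of $\mc D^q(\R^d)$ in $\on{Id}+H^q$; likewise $\tilde g_b$ must be cut off and extended by zero using a cutoff equal to $1$ on a neighbourhood of $\tilde\ph_b(\on{supp}(\si_b\circ\th_b\inv))$ chosen uniformly over that neighbourhood of $\ph_0$. Only with these constructions in place does Lemma~\ref{thm:diff_rd_comp}, together with the module property and finite summation, deliver the claimed $C^k$-regularity; as written, your reduction does not go through.
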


Now we can state the analogue of Thm.~\ref{thm:main_euclidean} for Sobolev spaces on manifolds and $\mc D^q(M)$-equivariant maps.

\begin{theorem}
\label{thm:thm_main}
Let $q > \frac d2 + 1$, $0 \leq l \leq k$ and $F : H^q(M, N) \to H^q(M, P)$ be a $\mc D^q(M)$-equivariant $C^k$-map, i.e. $F(u \circ \ph) = F(u) \circ \ph$. Then $F$ maps $H^{q+l}$ into $H^{q+l}$ and $F: H^{q+l}(M, N) \to H^{q+l}(M, P)$ is a $C^{k-l}$-map.
\end{theorem}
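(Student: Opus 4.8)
The plan is to reduce Theorem~\ref{thm:thm_main} to the already-proven Euclidean result, Theorem~\ref{thm:main_euclidean}, by working in charts. The key structural observation is that the entire proof of the Euclidean theorem is local in nature (as the remark after that theorem emphasises), so the strategy is not to redo the inductive machinery of Steps 1--3, but rather to transport the equivariant map $F$ to a localised Euclidean setting where Theorem~\ref{thm:main_euclidean} (or its corollary for open subsets) applies. First I would fix a finite atlas for the compact manifold $M$, say charts $\ch_i : \mc U_i \to U_i \subseteq \R^d$, together with a subordinate partition of unity; since $M$ is compact this is a \emph{finite} cover, which is the crucial simplification over the $\R^d$ case where a uniformly locally finite cover was needed. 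The norm on $H^{q+l}(M,N)$ is then controlled chart-by-chart, so membership of $F(u)$ in $H^{q+l}$ and the regularity of $F$ on $H^{q+l}$ can both be checked locally.

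The heart of the argument is to reproduce the key differential identity from Step~1 of the Euclidean proof in the manifold setting. For each chart $\mc U_i$, I would pick compactly supported vector fields $X_j$ on $M$ (the manifold analogues of the $\om_i h_j$) whose values in the chart span the coordinate directions, and consider their flows $\ps_t = \on{exp}(t X_j)$, which are one-parameter subgroups in $\mc D^q(M)$. The existence of these flows for a vector field of Sobolev class is exactly the nontrivial point already cited in the Euclidean proof via \cite{Bruveris2014_preprint}. By Lem.~\ref{thm:diff_M_comp}, the map $(t,u) \mapsto u \circ \ps_t$ is $C^1$ from $\R \times H^{q+l}$ to $H^{q+l-1}$, so differentiating the equivariance relation $F(u \circ \ps_t) = F(u) \circ \ps_t$ at $t=0$ yields
\[
DF(u).(Du.X_j) = D(F(u)).X_j\,,
\]
which lets me bound the $H^{q}$-norm of the tangential derivative $X_j.F(u)$ in each chart by $\|DF(u)\|_{\mc L(H^q,H^q)}$ times the corresponding derivative of $u$. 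Summing over the finitely many charts and directions recovers the analogue of estimate~\eqref{eq:Fu_Hq1}, and the difference estimate analogous to~\eqref{eq:Fuv_Hq1} follows the same way. From there the inductive bootstrap of Steps~2 and~3 carries over verbatim, since it uses only these two local estimates together with Lem.~\ref{lem:extend_diff} and the module property of Sobolev spaces, all of which hold on $M$.

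The main obstacle I anticipate is purely technical bookkeeping rather than conceptual: the spaces $H^q(M,N)$ and $H^q(M,P)$ are not vector spaces but Hilbert manifolds, so the expressions $F(u)-F(v)$, $DF(u)-DF(v)$ and the $k$-linear operator norms used throughout the Euclidean proof do not literally make sense globally. I would resolve this by localising in the target as well: working inside charts $\et$ of $N$ and $P$, one represents $F$ near a given $u_0$ as a genuine map between open subsets of the Euclidean Sobolev spaces $H^q(U_i,\R^n)$ and $H^q(U_i,\R^m)$, and the differentiability statements are understood with respect to this manifold chart structure (the one from \cite{Inci2013}). The equivariance identity is preserved under pullback because composition on the right by $\ps_t$ commutes with composition on the left by chart maps. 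One must check that the localising cutoffs do not destroy the equivariance-derived identity --- here the observation that $\ps_t$ is supported in a single chart $\mc U_i$ and fixes the complement keeps everything inside one coordinate patch. Once the identity~$DF(u).(Du.X_j) = D(F(u)).X_j$ is established chart-wise, the remainder is an application of Theorem~\ref{thm:main_euclidean}'s proof to each local representative, and the finiteness of the atlas removes any uniformity concerns, so the conclusion $F \in C^{k-l}(H^{q+l}(M,N), H^{q+l}(M,P))$ follows.
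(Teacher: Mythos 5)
Your core computation coincides with the paper's: pick finitely many smooth vector fields spanning every tangent space (your compactly supported per-chart fields $\om_i\,\p_{x_j}$, summed over a finite atlas, are exactly an instance of the paper's global fields $X_1,\dots,X_A$), flow them, differentiate the equivariance identity to get $DF(u).(Tu.X_j)=T\left(F(u)\right).X_j$, deduce the analogues of \eqref{eq:Fu_Hq1} and \eqref{eq:Fuv_Hq1} from the equivalent norm, and then run the bootstrap of Steps 2--3 of Thm.~\ref{thm:main_euclidean}. That part is sound and is precisely what the paper does \emph{once the targets are Euclidean}.

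The genuine gap is your resolution of what you call the ``main obstacle'', namely that $H^q(M,N)$ and $H^q(M,P)$ are only Hilbert manifolds. Your fix --- represent $F$ near $u_0$, by working inside charts $\et$ of $N$ and $P$, as a map between open subsets of $H^q(U_i,\R^n)$ and $H^q(U_i,\R^m)$ --- fails for two reasons. First, maps near $u_0$ in $H^q(M,N)$ need not take values in a single chart of $N$: in the main application $N=P=M$ and $u_0$ is a diffeomorphism, hence surjective, so no chart of $N$ contains its image; the manifold charts of $H^q(M,N)$ from \cite{Inci2013} are modeled on $H^q$-sections of the pullback bundle $u_0^*TN$, not on maps into a coordinate patch of $N$. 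Second, and more fundamentally, $F$ has no ``local representative'' over a chart domain $U_i\subseteq\R^d$ of $M$: equivariance does not imply locality, so $F(u)$ restricted to $\mc U_i$ may depend on the values of $u$ on all of $M$ (the exponential map of a right-invariant metric, the motivating example, is exactly such a nonlocal map). Hence ``applying Theorem~\ref{thm:main_euclidean}'s proof to each local representative'' is not an available move, and the fact that $\ps_t$ fixes the complement of $\mc U_i$ does not keep $F$ inside one coordinate patch. What is needed is to linearize the \emph{target} while keeping the domain $M$ global: this is the paper's Step 1, which embeds $N$ and $P$ into $\R^{n_0}$, $\R^{m_0}$ by Whitney's theorem, takes tubular neighborhoods $N_0,P_0$ with inclusions $\io_N,\io_P$ and retractions $r_N,r_P$, and replaces $F$ by the still-equivariant extension $F_0(u)=\io_P\circ F(r_N\circ u)$ defined on the open subset $H^q(M,N_0)$ of the vector space $H^q(M,\R^{n_0})$; after the Euclidean-target argument is applied to $F_0$ (via the open-subset version, Cor.~\ref{cor:local_M}), one recovers $F(u)=r_P\circ F_0(\io_N\circ u)$ and uses that left composition with the smooth maps $r_P,\io_N$ is smooth on Sobolev spaces. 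Without this (or an equivalent) linearization of the targets, the differences $F(u)-F(v)$, the operator norms $\|DF(u)\|_{\mc L(H^q,H^q)}$, and the use of Lem.~\ref{lem:extend_diff} in your bootstrap cannot even be formulated.
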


\begin{proof}
{\bfseries Step 1.} Reduction to $N=\R^n$ and $P=\R^m$. \\
Using Whitney's embedding theorem we can embed $N$ and $P$ into Euclidean space. Let $N_0 \subseteq \R^{n_0}$ be a tubular neighborhood of $N$ in $\R^{n_0}$ and denote by $\io_N : N \to N_0$ and $r_N : N_0 \to N$ the inclusion and retraction maps. Similarly we introduce the tubular neighborhood $P_0 \subseteq \R^{m_0}$ of $P$ and the maps $\io_P$ and $r_P$. Then we extend $F$ to the map $F_0: H^q(M,N_0) \to H^q(M,P_0)$ via the following commutative diagram
\[
\xymatrix{
H^q(M,N) \ar[r]^F & H^q(M,P) \ar[d]^{u \mapsto \io_P \circ u} \\
H^q(M,N_0) \ar[u]^{u \mapsto r_N \circ u} \ar[r]_{F_0} & H^q(M,P_0)
}
\]
The extension $F_0$ is again $\mc D^q(M)$-equivariant, since
\[
F_0(u \circ \ph) = \io_P \circ F(r_N \circ u \circ \ph) = \io_P \circ F(r_N \circ u) \circ \ph
= F_0(u) \circ \ph\,.
\]
We note that $H^q(M,N_0)$ and $H^q(M,P_0)$ are open subsets of $H^q(M,\R^{n_0})$ and $H^q(M,\R^{m_0})$ respectively. If the theorem is proven in the case, when $N$ and $P$ are the Euclidean space, then together with Rem.~\ref{rem:local_M} this shows that $F_0 : H^{q+l}(M,N_0) \to H^{q+l}(M,P_0)$ is $C^{k-l}$. Now we write
\[
F(u) = r_P \circ \io_P \circ F(r_N \circ \io_N \circ u) = r_p \circ F_0(\io_N \circ u)\,.
\]
Since composition from the left with $C^\infty$-functions $r_P, \io_N$ are $C^\infty$-maps on Sobolev spaces, it follows that $F : H^{q+l}(M,N) \to H^{q+l}(M,P)$ is $C^{k-l}$.

For the rest of the proof we will assume that $N=\R^n$ and $P = \R^m$.

{\bfseries Step 2.} If $F:H^q \to H^q$ is $C^1$, then $F: H^{q+1} \to H^{q+1}$ is $C^0$. \\
Choose smooth vector fields $X_1,\dots,X_A \in \mf X(M)$ such that 
\[
\on{span}\{X_1(x),\dots,X_A(x)\} = T_xM\,,
\] 
for all $x \in M$. Then an equivalent norm for $H^q(M,\R^n)$ is given by
\[
\|u \|_{H^q} \sim \| u \|_{H^{q-1}} + \sum_{j=1}^A \| Tu.X_j \|_{H^{q-1}}\,.
\]

Let $\ps_t = \on{exp}(t.X_j)$, where $\on{exp}$ denotes the Lie group exponential on $\mc D^q(M)$. Then $t \mapsto \ps_t$ is a one-parameter subgroup and the map
\[
\R \x H^{q+1} \to H^q\,,\quad(t, u) \mapsto u \circ \ps_t
\]
is $C^1$. Now fix $u \in H^{q+1}$ and consider the identity $F(u \circ \ps_t) = F(u) \circ \ps_t$. Both sides of the identity are $C^1$ as maps $\R \to H^{q-1}$ and by differentiating at $t=0$ we obtain the identity
\[
DF(u).(Tu.X_j) = T\left(F(u)\right). X_j\,,
\]
We can estimate the $H^q$-norm of the right hand side via the left hand side,
\[
\| T\left(F(u)\right).X_j\|_{H^q} \leq \|DF(u)\|_{\mc L(H^q, H^q)} \| Tu.X_j\|_{H^q}\,,
\]
and hence we see using the $H^q$-norm from above that
\[
\| F(u) \|_{H^{q+1}} \lesssim \| F(u) \|_{H^q} + \| DF(u) \|_{\mc L(H^q, H^q)}\,.
\| u \|_{H^{q+1}}
\]
This shows that $F(u) \in H^{q+1}$, provided $u \in H^{q+1}$. Regarding continuity we can show in the same manner the estimate
\begin{multline*}
\| F(u) -F(v) \|_{H^{q+1}} \lesssim  \| F(u) - F(v) \|_{H^q} + \\ +
\| DF(u) -DF(v) \|_{\mc L(H^q, H^q)} \| u \|_{H^{q+1}} +  \| DF(v) \|_{\mc L(H^q, H^q)} \| u -v\|_{H^{q+1}}
\end{multline*}
for $u, v \in H^{q+1}$, from which the continuity of $F: H^{q+1} \to H^{q+1}$ follows.

The rest of the proof follows in the same way as steps 2 and 3 of the proof of Thm.~\ref{thm:main_euclidean}.
\end{proof}

\begin{remark}
\label{rem:local_M}
As in the previous section, all the arguments in the proof of the theorem are local in nature, i.e. the one-parameter subgroups $\on{exp}(t.\om_i.h_j)$ are only considered around $t=0$ and the statement about differentiability itself is local. Hence the theorem continues to hold for functions defined on open subsets of Sobolev spaces and because the local version is used implicitely in the proof we state it below.
\end{remark}

\begin{corollary}
\label{cor:local_M}
Let $q > \frac d2 + 1$, $0 \leq l \leq k$, $V \subseteq H^q(M,N)$ an open subset and $F : V \to H^q(M, P)$ a $\mc D^q(M)$-equivariant $C^k$-map, i.e. $F(u \circ \ph) = F(u) \circ \ph$, whenever $u, u \circ \ph \in V$. Then $F$ maps $U = V \cap H^{q+l}$ into $H^{q+l}$ and $F:U \to H^{q+l}(M, P)$ is a $C^{k-l}$-map.
\end{corollary}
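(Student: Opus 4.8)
The strategy is to retrace the proof of Theorem~\ref{thm:thm_main} step by step, checking at each stage that only the \emph{local} structure of the domain enters, so that replacing $H^q(M,N)$ by an open subset $V$ costs nothing. The one genuinely new point is that the equivariance identity now holds only on $V$, so I must verify that the curves along which I differentiate stay inside $V$.

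First I would carry out the reduction to Euclidean targets exactly as in Step~1 of the proof of Theorem~\ref{thm:thm_main}. After embedding $N,P$ by Whitney's theorem and choosing tubular neighborhoods $N_0, P_0$, I extend $F$ to $F_0 = (\io_P \circ {}) \circ F \circ (r_N \circ {})$, now defined on $V_0 = \{ u \in H^q(M, N_0) : r_N \circ u \in V \}$. Since left composition with the smooth map $r_N$ is continuous, $V_0$ is open in $H^q(M, \R^{n_0})$. The relation $r_N \circ (u\circ\ph) = (r_N\circ u)\circ\ph$ shows that $F_0$ is again $\mc D^q(M)$-equivariant on $V_0$, and because left composition with the smooth maps $r_P, \io_N$ preserves the $C^{k-l}$-property on Sobolev spaces, it suffices to prove the statement for $N = \R^n$, $P = \R^m$ with $F$ defined on an open set $V \subseteq H^q(M,\R^n)$.

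The heart of the matter is the local version of Step~2: if $F : V \to H^q$ is $C^1$, then $F : U \to H^{q+1}$ is $C^0$ on $U = V \cap H^{q+1}$. Here I would fix $u \in U$ and form $\ps_t = \on{exp}(t.X_j)$. Because $(t, u) \mapsto u \circ \ps_t$ is $C^1$ from $\R \x H^{q+1}$ into $H^q$ and $\ps_0 = \on{Id}$, the curve $t \mapsto u \circ \ps_t$ is continuous into $H^q$ with value $u \in V$ at $t=0$; openness of $V$ then yields $\ep > 0$ with $u \circ \ps_t \in V$ for $|t| < \ep$. On this interval the identity $F(u \circ \ps_t) = F(u) \circ \ps_t$ is available, and differentiating at $t=0$ gives the same identity $DF(u).(Tu.X_j) = T(F(u)).X_j$ as before. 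The pointwise estimate $\|F(u)\|_{H^{q+1}} \lesssim \|F(u)\|_{H^q} + \|DF(u)\|_{\mc L(H^q, H^q)}\|u\|_{H^{q+1}}$ then follows verbatim, as does the difference estimate, which for continuity I would apply to $u, v$ ranging over a small $H^{q+1}$-ball around a fixed point of $U$ and contained in $U$.

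Finally, the inductive passage to general $k$ and the trade-off $0 \leq l \leq k$ of Steps~2 and~3 are purely Banach-space arguments resting on Lemmas~\ref{lem:reverse_taylor} and~\ref{lem:extend_diff}. Since Lemma~\ref{lem:extend_diff} is already phrased for an open set $U = V \cap E$, and all estimates are required only on bounded balls contained in the open domain, these steps transfer unchanged once the derivatives $D^j F$ are regarded as equivariant maps on the open sets $V \x H^q(M,\R^{jn}) \subseteq H^q(M,\R^{(j+1)n})$, with the diagonal action $\ph$; their equivariance follows by differentiating $F(u\circ\ph)=F(u)\circ\ph$ in $u$. I expect the only real obstacle to be this bookkeeping: confirming that openness survives the tubular-neighborhood reduction and that the localised curve $t \mapsto u \circ \ps_t$ stays in the domain. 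Once that is in place, the quantitative estimates are identical to those already established, and the proof concludes exactly as for Theorem~\ref{thm:thm_main}.
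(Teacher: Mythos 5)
Your proposal is correct and takes essentially the same route as the paper: the paper justifies this corollary via Rem.~\ref{rem:local_M}, namely that every argument in the proof of Thm.~\ref{thm:thm_main} is local in nature, which is exactly what you verify in detail. Your explicit checks---that openness survives the tubular-neighborhood reduction, that $t \mapsto u \circ \ps_t$ remains in $V$ for small $t$ by continuity and openness, and that Lem.~\ref{lem:extend_diff} is already stated for $U = V \cap E$---are precisely the points the remark leaves implicit.
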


\section{Geodesic Boundary Value Problem on the Diffeomorphism Group}
\label{sec:diff}

In this section we assume that $M$ is $\R^d$ or a compact manifold without boundary of dimension $d$ and $q \in \R$ with $q > \frac d2 + 1$.

The group $\mc D^q(M)$ introduced in the previous sections is a smooth Hilbert manifold and a topological group. Let $G$ be a smooth right-invariant metric on $\mc D^q(M)$, i.e.
\[
G_\ph(X, Y) = \langle X \circ \ph\inv, Y \circ \ph\inv \rangle\,,
\]
for some fixed inner product $\langle \cdot, \cdot \rangle$ on $\mf X^q(M)$ and $X,Y \in T_\ph \mc D^q(M)$. Note that $\langle \cdot,\cdot \rangle$ does not necessarily have to induce the Sobolev topology on $\mf X^q(M)$. If the geodesic spray of the metric is a smooth vector field on $T\mc D^q(M)$, then it admits an exponential map
\[
\on{Exp} : T\mc D^q(M) \supseteq \mc U \to \mc D^q(M)\,,
\]
defined on an open neighborhood $\mc U$ of the zero section. Because the metric is right-invariant, the geodesic spray and the exponential map are $\mc D^q(M)$-equivariant, i.e.
\[
\on{Exp}(X \circ \ps) = \on{Exp}(X) \circ \ps\,,
\]
holds for $X, X \circ \ps \in \mc U$. 

Because $\mc D^q(M)$ is not a Lie group -- in particular the inverse map $\ph \mapsto \ph\inv$ is continuous but not differentiable -- not every inner product $\langle\cdot,\cdot\rangle$ leads to a smooth right-invariant metric $G$. Similarly, because the topology induced by the metric can be weaker than the manifold topology, not every smooth metric admits an exponential map. Hence the assumption, that $G$ is a smooth metric with a smooth exponential map, are nontrivial.

\subsection{Metrics with smooth sprays}
At this point we should give examples of metrics to which this discussion applies.

Let $(M,g)$ be a compact Riemannian manifold and $d\mu$ the induced volume form. It is shown in \cite{GayBalmaz2009} that the geodesic spray of the $H^1$-metric
\[
\langle u, v \rangle = \int_M g(u,v) + g(\nabla u, \nabla v)\, d\mu
\]
is smooth on $\mc D^q(M)$. In \cite{Modin2015} the smoothness of the spray is shown for a more general family of $H^1$-metrics, defined using the Hodge decomposition of vector fields.

If $n > \frac d2 + 1$ is an integer and
\[
A=(\on{Id} + \De^n) \text{ or }
A=(\on{Id} + \De)^n\,,
\]
where $\De u = (\de du^\flat + d\de u^\flat)^\sharp$ is the positive definite Hodge Laplacian or some other combination of intrinsically defined differential operators with smooth coefficient functions, such that $A$ is positive and elliptic, then the metric induced by the inner product
\[
\langle u, v \rangle = \int_M g(Au,v) d\mu\,,
\]
is smooth on $\mc D^n(M)$; see \cite{Ebin1970} and \cite{Misiolek2010} for details.

Let $M=\R^d$ and $A$ be a Fourier multiplier of order $2s \geq 1$ with $s \in \R$, i.e.
\[
\mc F\left(Au\right)(\xi) = a(\xi) \mc F u(\xi)\,,
\]
for some function $a(\xi)$ satisfying certain asymptotic and ellipticity conditions, then the metric induced by the inner product
\[
\langle u, v \rangle = \int_{\R^d} Au \cdot v \ud x\,,
\]
is smooth on $\mc D^q(\R^d)$ provided $q \geq s$ and $q > \frac d2 + 1$. The same is true for metrics defined by Fourier multipliers on $\mc D^q(S^1)$; see \cite{Bauer2015} and \cite{Escher2014}.

\subsection{Initial value problem}
The smoothness and equivariance of the exponential map together imply via Thm.~\ref{thm:thm_main} that for $k \geq 0$ the map
\[
\on{Exp}: \mc U \cap T \mc D^{q+k}(M) \to \mc D^{q+k}(M)
\]
is also smooth. To see that we can apply the theorem note that $TH^q(M,M) = H^q(M,TM)$ and hence $T\mc D^q(M) \subset H^q(M,TM)$ is an open subset. This also holds in the smooth category ($k=\infty$) and the map
\[
\on{Exp} : \mc U \cap T\on{Diff}_{H^\infty}(M) \to \on{Diff}_{H^\infty}(M)
\]
is smooth. When $M$ is compact $\on{Diff}_{H^\infty}(M) = \on{Diff}(M)$ is the group of smooth diffeomorphisms; for $M=\R^d$ it is the intersection $\on{Diff}_{H^\infty}(\R^d) = \bigcap_{k > \frac q2 + 1} \mc D^q(\R^d)$.

This can be interpreted as a ``no loss of regularity''-result for the geodesic equation on $\mc D^q(M)$. If the initial conditions $\ph_0, X_0$ of a geodesic are of class $H^{q+k}$ with $k \geq 0$, then the whole geodesic $\ph(t) = \on{Exp}(t.X_0)$ is also of class $H^{q+k}$. In other words, the geodesic is at least as smooth as its initial conditions. Because the geodesic equation can be solved forward and backward in time we also have a version of a ``no loss of regularity''-result: if for some time $t > 0$, both $\ph(t)$ and $\p_t \ph(t)$ are of class $H^{q+k}$, then the initial conditions $\ph_0, X_0$ must already be of class $H^{q+k}$

The property of the exponential map to preserve smoothness was first observed in \cite{Ebin1970}; for a detailed exposition see \cite{Escher2014}. However, for the proofs in the above references one needs to know a priori that for a given metric the geodesic spray is smooth on all groups $\mc D^{q+k}(M)$ with $k \in \mathbb N$. One advantage of Thm.~\ref{thm:thm_main} is that the smoothness of the spray on $\mc D^q(M)$ alone already implies its smoothness on $\mc D^{q+k}(M)$ for all $k \geq 0$.

\subsection{Boundary value problem}
Of interest is the following inverse problem: given a geodesic, is the initial velocity at least as smooth as the boundary diffeomorphisms? 

If $\ph(t)$ is a geodesic in $\mc D^q(M)$ and $\ph(0), \ph(1) \in \mc D^{q+k}(M)$ for some $k \geq 0$, does it follow that $\ph(t) \in \mc D^{q+k}(M)$ for $0 < t < 1$? Note that this is different from the ``no loss of regularity''-result above, which required both $\ph(1)$ and $\p_t \ph(1)$ to be of class $H^{q+k}$. We provide an affirmative answer under the assumption that $\ph(0)$ and $\ph(1)$ are nonconjugate along $\ph(t)$.

A first answer was given in \cite{Kappeler2008} for $H^k$-metrics ($k\geq 1$ on $\mc D^q(S^1 \x S^1)$ with $q$ sufficiently large, provided the geodesic remains in a sufficiently small neighborhood around the identity. The following application of Thm.~\ref{thm:thm_main} allows us to extend this result to a wider class of metrics on compact manifolds with fewer restrictions on the order and the geodesic.

\begin{proposition}
\label{prop:reg_diff}
Let $G$ be a smooth right-invariant metric on $\mc D^q(M)$ with a smooth exponential map
$\on{Exp} : T\mc D^q(M) \supseteq \mc U \to \mc D^q(M)$ and $\ph_0, \ph_1 \in \mc D^q(M)$, $X \in T_{\ph_0}\mc D^q(M)$. If $\ph_1 = \on{Exp}(X)$ and $\ph_0, \ph_1 \in \mc D^{q+k}(M)$ for some $k \geq 0$ and $\pi_{\mc D^q(M)} \x \on{Exp}: T \mc D^q(M) \to \mc D^q(M) \x \mc D^q(M)$ is locally invertible around $X$, then $X \in T\mc D^{q+k}(M)$.
\end{proposition}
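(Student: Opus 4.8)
The plan is to upgrade the regularity of $X$ by applying the regularity theorem (in its local form, Cor.~\ref{cor:local_M}) not to $\on{Exp}$ itself but to the \emph{local inverse} of $\Phi := \pi_{\mc D^q(M)} \x \on{Exp}$. First I would record that $\Phi$ is smooth and $\mc D^q(M)$-equivariant. Smoothness is immediate, since $\on{Exp}$ is smooth by assumption and the bundle projection $\pi_{\mc D^q(M)}$ is smooth. For equivariance, both factors are equivariant: right-invariance gives $\on{Exp}(X \o \ps) = \on{Exp}(X) \o \ps$, while the footpoint of $X \o \ps$ is $\ph_0 \o \ps$, so
\[
\Phi(X \o \ps) = (\ph_0 \o \ps, \on{Exp}(X) \o \ps) = \Phi(X) \o \ps\,,
\]
where $\mc D^q(M)$ acts diagonally on the right. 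Under the standard identifications $T\mc D^q(M)$ is open in $H^q(M, TM)$ and $\mc D^q(M) \x \mc D^q(M)$ is open in $H^q(M, M \x M)$, and the diagonal action is exactly right composition; thus $\Phi$ fits the framework of Cor.~\ref{cor:local_M} with source manifold $M$, target $M \x M$ and image space $TM$.

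By hypothesis $\Phi$ is locally invertible around $X$, so it admits a $C^\infty$ local inverse $\Psi$, defined on an open neighborhood $W$ of $(\ph_0, \ph_1)$ in $\mc D^q(M) \x \mc D^q(M)$ and satisfying $\Psi(\ph_0,\ph_1) = X$. The crucial observation is that $\Psi$ inherits the equivariance of $\Phi$: if $(\al, \be)$ and $(\al \o \ps, \be \o \ps)$ both lie in $W$ with $\ps$ close to the identity, then writing $Y = \Psi(\al,\be)$ we have $\Phi(Y \o \ps) = \Phi(Y) \o \ps = (\al \o \ps, \be \o \ps)$, and since $Y \o \ps$ remains in the domain on which $\Phi$ is injective we conclude $\Psi(\al \o \ps, \be \o \ps) = Y \o \ps = \Psi(\al, \be) \o \ps$. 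By Rem.~\ref{rem:local_M} the regularity theorem only invokes the one-parameter subgroups $\ps_t = \on{exp}(t.X_j)$ near $t = 0$, so exactly this local form of equivariance is what the argument needs.

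It then remains to apply Cor.~\ref{cor:local_M} to the $C^\infty$, $\mc D^q(M)$-equivariant map $\Psi : W \to H^q(M, TM)$. Taking $l = k$, the corollary shows that $\Psi$ maps $W \cap H^{q+k}(M, M \x M)$ into $H^{q+k}(M, TM)$. Since $\ph_0, \ph_1 \in \mc D^{q+k}(M)$, the pair $(\ph_0, \ph_1)$ lies in $W \cap H^{q+k}(M, M \x M)$, whence $X = \Psi(\ph_0, \ph_1) \in H^{q+k}(M, TM)$. As the footpoint of $X$ is $\ph_0 \in \mc D^{q+k}(M)$, this yields $X \in T\mc D^{q+k}(M)$, as claimed.

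I expect the main difficulty to be bookkeeping rather than analysis: one must check that $\Psi$ is genuinely equivariant and that its domain and codomain match the hypotheses of Cor.~\ref{cor:local_M}, in particular that the diagonal $\mc D^q(M)$-action on $\mc D^q(M) \x \mc D^q(M) \subseteq H^q(M, M \x M)$ coincides with the right-composition action appearing there. The locality built into Rem.~\ref{rem:local_M} is precisely what makes the equivariance of $\Psi$ --- which a priori holds only for diffeomorphisms near the identity, and only while preimages stay in the region of injectivity --- sufficient to run the argument.
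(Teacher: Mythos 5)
Your proposal is correct and follows essentially the same route as the paper: both pass to the local inverse $\on{Log}$ of $\pi_{\mc D^q(M)} \x \on{Exp}$, establish its $\mc D^q(M)$-equivariance from the equivariance of the two factors, and then apply the regularity theorem (in its local form, Cor.~\ref{cor:local_M}) with $l=k$ to conclude $X = \on{Log}(\ph_0,\ph_1) \in T\mc D^{q+k}(M)$. Your explicit verification that the equivariance of the inverse only holds while preimages stay in the region of injectivity, and that this local form suffices by Rem.~\ref{rem:local_M}, is a point the paper leaves implicit.
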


\begin{proof}
We have $\pi_{\mc D^q(M)} \x \on{Exp}(X) = (\ph_0, \ph_1)$. Let
\[
\on{Log} : \mc V_0 \x \mc V_1 \to T \mc D^q(M)
\]
be the local inverse of $\pi_{\mc D^q(M)} \x \on{Exp}$ with $\mc V_i \subseteq \mc D^q(M)$ an open neighborhood of $\ph_i$. Because $\on{Exp}$ and $\pi_{\mc D^q(M)}$ are $\mc D^q(M)$-equivariant, so is $\on{Log}$,
\[
\on{Log}(\et_0 \circ \ps, \et_1 \circ \ps) = \on{Log}(\et_0, \et_1) \circ \ps\,,
\]
for $\et_i \in \mc V_i$ and $\ps \in \mc D^q(M)$ such that $\et_i \circ \ps \in \mc V_i$. Now we apply Thm.~\ref{thm:thm_main} -- note that $\on{Log}$ is a $C^\infty$-function -- to conclude that
\[
\on{Log}: \left(\mc V_0 \cap \mc D^{q+k}(M)\right) \x \left(\mc V_1 \cap \mc D^{q+k}(M)\right)
\to T\mc D^{q+k}(M)
\]
is also a $C^\infty$-function. Since $\ph_0, \ph_1 \in \mc D^{q+k}(M)$, we obtain that $X = \on{Log}(\ph_0, \ph_1) \in T\mc D^{q+k}(M)$.
\end{proof}

For a $C^\infty$-smooth exponential map we can also make a statement about the group $\on{Diff}(M)$ of $C^\infty$-smooth diffeomorphisms. If $M=\R^d$, then we have to consider the group $\on{Diff}_{H^\infty}(\R^d) = \bigcap_{q > \frac d2 + 1} \mc D^q(\R^d)$ of diffeomorphisms that decay to $\on{Id}$ like an $H^\infty$-function.

\begin{corollary}
Let $G$ be a smooth right-invariant metric on $\mc D^q(M)$ with a smooth exponential map. If $\ph_0, \ph_1 \in \on{Diff}(M)$ (or $\on{Diff}_{H^\infty}(\R^d)$ for $M=\R^d$) are nonconjugate along the geodesic $\ph(t)$, then $\ph(t) \in \on{Diff}(M)$ for all $t \in [0,1]$.
\end{corollary}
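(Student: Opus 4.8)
The plan is to apply Proposition~\ref{prop:reg_diff} for every $k$ simultaneously and then feed the resulting smooth initial data into the ``no loss of regularity'' statement for the initial value problem. First I would record the characterisation $\on{Diff}(M) = \bigcap_{k \geq 0} \mc D^{q+k}(M)$ for $M$ compact, and correspondingly $\on{Diff}_{H^\infty}(\R^d) = \bigcap_{k \geq 0} \mc D^{q+k}(\R^d)$. With this, the hypothesis $\ph_0, \ph_1 \in \on{Diff}(M)$ is exactly the statement that $\ph_0, \ph_1 \in \mc D^{q+k}(M)$ for all $k \geq 0$.

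Next I would let $X \in T_{\ph_0}\mc D^q(M)$ denote the initial velocity of the geodesic, so that $\ph(t) = \on{Exp}(tX)$ and in particular $\ph_1 = \on{Exp}(X)$. The assumption that $\ph_0$ and $\ph_1$ are nonconjugate along $\ph(t)$ is, by definition, precisely the statement that $\pi_{\mc D^q(M)} \x \on{Exp}$ is locally invertible around $X$, which is the remaining hypothesis of Proposition~\ref{prop:reg_diff}. Applying that proposition once for each fixed $k \geq 0$ then gives $X \in T\mc D^{q+k}(M)$, and since this holds for every $k$ we obtain $X \in T\on{Diff}(M)$ (resp. $T\on{Diff}_{H^\infty}(\R^d)$).

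Finally I would invoke the initial value result established earlier in this section: because $\on{Exp}$ is smooth and equivariant, Thm.~\ref{thm:thm_main} shows that $\on{Exp} : \mc U \cap T\on{Diff}_{H^\infty}(M) \to \on{Diff}_{H^\infty}(M)$ is smooth. Since $\ph_0 \in \on{Diff}(M)$ and $X \in T_{\ph_0}\on{Diff}(M)$, and since $tX \in \mc U$ for all $t \in [0,1]$ (the geodesic joining $\ph_0$ to $\ph_1$ being assumed to exist on the full interval), it follows that $\ph(t) = \on{Exp}(tX) \in \on{Diff}(M)$ for every $t \in [0,1]$, which is the claim.

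The point requiring care is that a single nonconjugacy hypothesis, stated at the base level $H^q$, is enough to control all higher levels at once: the local inverse $\on{Log}$ produced in Proposition~\ref{prop:reg_diff} is one fixed smooth $\mc D^q(M)$-equivariant map, and it is exactly Thm.~\ref{thm:thm_main} applied to $\on{Log}$ that lets the conclusion $X \in T\mc D^{q+k}(M)$ propagate uniformly across all $k$, so that $X$ genuinely lands in the intersection $T\on{Diff}(M)$. A minor technical verification is that the whole segment $tX$, $t \in [0,1]$, remains in the domain $\mc U$, which follows by rescaling from the existence of $\on{Exp}(X) = \ph_1$.
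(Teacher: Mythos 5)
Your proposal is correct and follows essentially the same route as the paper: apply Prop.~\ref{prop:reg_diff} (once for each $k$, equivalently via the intersection $\on{Diff}(M) = \bigcap_{k\geq 0}\mc D^{q+k}(M)$) to conclude that the initial velocity $\p_t\ph(0)$ lies in $T_{\ph_0}\on{Diff}(M)$, and then use the smoothness of $\on{Exp}$ on $\mc U \cap T\on{Diff}_{H^\infty}(M)$ to push the whole geodesic $\ph(t) = \on{Exp}(t\,\p_t\ph(0))$ into $\on{Diff}(M)$. Your added care about the domain condition $tX \in \mc U$ and the uniformity across $k$ simply makes explicit what the paper's two-line proof leaves implicit.
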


\begin{proof}
The geodesic is given by $\ph(t) = \on{Exp}(t \p_t \ph(0))$ and Prop.~\ref{prop:reg_diff} shows that $\p_t \ph(0) \in T_{\ph_0}\on{Diff}(M)$. Hence $\ph(t) \in \on{Diff}(M)$ as well.
\end{proof}

\section{Geodesic Boundary Value Problem on $\on{Imm}(S^1,\R^d)$}
\label{sec:curves}

We can apply the same idea to show a result about the regularity of the geodesic boundary value problem on the space of curves. Let $d \geq 2$. We consider the space
\[
\on{Imm}(S^1,\R^d) = \left\{ c \in C^\infty(S^1,\R^d) \,:\, c'(\th) \neq 0\,,\; \forall \th \in S^1 \right\}
\]
of immersions or smooth regular curves and for $q > 3/2$ also the space of Sobolev curves
\[
\mc I^q(S^1,\R^d) = \left\{ c \in H^1(S^1,\R^d) \,:\, c'(\th) \neq 0\,,\; \forall \th \in S^1 \right\}\,,
\]
together with the family of reparametrization invariant Sobolev metrics
\[
G_c(h,k) = \int_{S^1} a_0 \langle h, k \rangle + \dots + 
a_n \langle D_s^n h, D_s^k \rangle \ud s\,;
\]
here $D_s h = \frac{1}{|c'|} h'$ and $\ud s = |c'|\ud \th$ denote differentiation and integration with respect to arc-length respectively. The coefficients $a_j \geq 0$ are assumed to be constant with $a_0, a_n > 0$, $n$ is the order of the metric and $h, k \in T_c\on{Imm}(S^1,\R^d)$ are tangent vectors at $c$.

The space $\on{Imm}(S^1,\R^d)$ is an open set in the Fr\'echet space $C^\infty(S^1, \R^d)$ with respect to the $C^\infty$-topology and $\mc I^q(S^1,\R^d)$ is open in $H^q(S^1,\R^d)$. As open subsets of vector spaces the tangent bundles of the spaces $\on{Imm}(S^1,\R^d)$ and $\mc I^q(S^1,\R^d)$ are trivial,
\begin{align*}
T\on{Imm}(S^1,\R^d) &\cong \on{Imm}(S^1,\R^d) \x C^\infty(S^1,\R^d) \\
T\mc I^q(S^1,\R^d) &\cong \mc I^q(S^1,\R^d) \x H^q(S^1,\R^d)\,.
\end{align*}

Sobolev metrics  on curves were first introduced in \cite{Charpiat2007,Mennucci2007,Michor2006c}. They were generalized to immersed higher-dimensional manifolds in \cite{Bauer2011b}. See \cite{Bauer2014} for an overview of their properties and how they relate to other metrics used in shape analysis.

For $n\geq 2$ the metric $G$ of order $n$ can be extended to a smooth Riemannian metric on $\mc I^n(S^1,\R^d)$; then $(\mc I^n(S^1,\R^d), G)$ is a strong Riemannian manifold and $G$ is $\mc D^n(S^1)$-invariant. This was first observed in \cite[3.2]{Bruveris2014}.

A $\mc D^n(S^1)$-invariant metric has a $\mc D^n(S^1)$-invariant spray and therefore the exponential map
\[
\on{Exp} : \mc I^n(S^1,\R^d) \times H^n(S^1,\R^d) \to \mc I^n(S^1,\R^d)
\]
is $\mc D^n(S^1)$-equivariant. It is shown in \cite[Thm.~5.5]{Bruveris2014} and \cite[Thm.~4.3]{Bruveris2015} that the manifold $(\mc I^n(S^1,\R^d), G)$ is geodesically complete and hence $\on{Exp}$ is defined globally. Thus Thm.~\ref{thm:thm_main} implies the following.

\begin{proposition}
Let $n \geq 2$ and $G$ be a Sobolev metric of order $n$ with constant coefficients. Then for all $k \geq 0$,
\[
\on{Exp} : \mc I^{n+k}(S^1,\R^d) \times H^{n+k}(S^1,\R^d) \to \mc I^{n+k}(S^1,\R^d)
\]
is a $C^\infty$-map. In particular, if $c_0 \in \mc I^{n+k}(S^1,\R^d)$ and $u \in H^{n+k}(S^1,\R^d)$, then $\on{Exp}(c_0, u) \in \mc I^{n+k}(S^1,\R^d)$.

The proposition also holds for smooth curves, i.e. $k=\infty$.
\end{proposition}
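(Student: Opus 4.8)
The plan is to view $\on{Exp}$ as a $\mc D^n(S^1)$-equivariant map between Sobolev spaces of maps out of the compact manifold $S^1$, and then to invoke the local form of Theorem~\ref{thm:thm_main}, i.e. Corollary~\ref{cor:local_M}. Since $\dim S^1 = 1$ and $n \geq 2$, the regularity threshold $n > \dim(S^1)/2 + 1 = 3/2$ is satisfied, so the corollary applies with $q = n$.

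First I would package the product domain as a single mapping space. A pair $(c, u) \in \mc I^n(S^1,\R^d) \times H^n(S^1,\R^d)$ is identified with the single map $(c, u) : S^1 \to \R^d \x \R^d = \R^{2d}$. Under this identification the domain becomes the subset $V = \mc I^n(S^1,\R^d) \times H^n(S^1,\R^d) \subseteq H^n(S^1, \R^{2d})$, which is open because the immersion condition $c'(\theta) \neq 0$ is open under the embedding $H^n(S^1,\R^d) \hookrightarrow C^1(S^1,\R^d)$. I then record the two hypotheses of Corollary~\ref{cor:local_M} for the $C^\infty$ map $\on{Exp} : V \to H^n(S^1,\R^d)$ with $N = \R^{2d}$ and $P = \R^d$. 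Smoothness of $\on{Exp}$ and its global definition on $V$ are the results recalled above. For equivariance, the diffeomorphism group $\mc D^n(S^1)$ acts on $V$ diagonally by $(c, u) \mapsto (c \circ \ph, u \circ \ph)$, and right-invariance of $G$ gives exactly
\[
\on{Exp}(c \circ \ph, u \circ \ph) = \on{Exp}(c, u) \circ \ph\,,
\]
which is $\mc D^n(S^1)$-equivariance in the sense required by the corollary.

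Applying Corollary~\ref{cor:local_M} with $l = k$ then yields that
\[
\on{Exp} : V \cap H^{n+k}(S^1, \R^{2d}) \to H^{n+k}(S^1, \R^d)
\]
is $C^\infty$ for every $k \geq 0$. To finish I would observe that $V \cap H^{n+k}(S^1, \R^{2d}) = \mc I^{n+k}(S^1,\R^d) \times H^{n+k}(S^1,\R^d)$ and that the image lands in $\mc I^{n+k}(S^1,\R^d)$: the output $\on{Exp}(c, u)$ is already an immersion since it lies in $\mc I^n(S^1,\R^d)$, and it now additionally lies in $H^{n+k}$, hence in $\mc I^{n+k} = \{c \in H^{n+k} : c' \neq 0\}$. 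The smooth case $k = \infty$ follows either by intersecting the conclusions over all finite $k$ or directly from the smooth corollary to the theorem. I do not anticipate a genuine obstacle: all analytic content is supplied by Theorem~\ref{thm:thm_main}. The only point requiring care is the packaging step -- confirming that the product is genuinely an open subset of a single Sobolev mapping space and that the diagonal reparametrization action is precisely the one under which $\on{Exp}$ is equivariant -- after which the gain in spatial regularity is immediate.
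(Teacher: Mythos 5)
Your proposal is correct and follows essentially the same route as the paper: the paper likewise derives the result by combining the $\mc D^n(S^1)$-equivariance of $\on{Exp}$ (from right-invariance of $G$), its smoothness and global definition (geodesic completeness), and Thm.~\ref{thm:thm_main} in its local form (Cor.~\ref{cor:local_M}), applied on the open subset $\mc I^n(S^1,\R^d)\times H^n(S^1,\R^d)\subseteq H^n(S^1,\R^{2d})$. Your packaging of the pair $(c,u)$ as a single map into $\R^{2d}$ and the check that the image stays in $\mc I^{n+k}$ are exactly the details the paper leaves implicit.
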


This proposition has been proven directly in \cite[Thm.~3.7.]{Bruveris2014} and under slightly stronger assumptions in \cite[Thm.~4.3.]{Michor2007}. Informally it states that the geodesic with respect to a Sobolev metric is at least as smooth as the initial curve and initial velocity. We are interested in the reverse implication: is the initial velocity of a geodesic at least as smooth as both endpoints? While we cannot give an unconditionally affirmative answer, we can do so under the assumption that the endpoints are non-conjugate along the given geodesic.

\begin{proposition}
Let $n \geq 2$ and $G$ be a Sobolev metric of order $n$ with constant coefficients. If $c_1 = \on{Exp}(c_0, u)$ and $c_0, c_1 \in \mc I^{n+k}(S^1,\R^d)$ for some $k \geq 0$ and $D_u \on{Exp}(c_0, u) : T_{c_0} \mc I^n \to T_{c_1} \mc I^n$ is invertible, then $u \in H^{n+k}(S^1,\R^d)$.
\end{proposition}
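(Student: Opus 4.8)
The plan is to follow the strategy of Prop.~\ref{prop:reg_diff} almost verbatim: construct a local, $\mc D^n(S^1)$-equivariant, $C^\infty$ inverse of the endpoint map near $(c_0,u)$, and then feed it into the local form of Thm.~\ref{thm:thm_main} to trade the smoothness of this inverse for spatial regularity. The single new point is that the hypothesis is phrased as invertibility of the partial derivative $D_u\on{Exp}(c_0,u)$, whereas Prop.~\ref{prop:reg_diff} assumed local invertibility of the full endpoint map; so the first task is to connect the two.

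First I would introduce the endpoint map
\begin{gather*}
\Phi = \pi \x \on{Exp} : \mc I^n(S^1,\R^d) \x H^n(S^1,\R^d) \to \mc I^n(S^1,\R^d) \x \mc I^n(S^1,\R^d)\,,\\
\Phi(c,w) = \bigl(c,\on{Exp}(c,w)\bigr)\,,
\end{gather*}
which is $C^\infty$ by the preceding proposition. In the trivialisation $T\mc I^n \cong \mc I^n \x H^n$ its derivative at $(c_0,u)$ is block lower-triangular,
\[
D\Phi(c_0,u) = \begin{pmatrix} \on{Id} & 0 \\ D_c\on{Exp}(c_0,u) & D_u\on{Exp}(c_0,u) \end{pmatrix}\,,
\]
so it is a bounded linear isomorphism of $H^n(S^1,\R^{2d})$ exactly when $D_u\on{Exp}(c_0,u)$ is invertible, which is the standing assumption. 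By the inverse function theorem on Hilbert manifolds, $\Phi$ is then a $C^\infty$-diffeomorphism from a neighborhood of $(c_0,u)$ onto a neighborhood $\mc V_0 \x \mc V_1$ of $(c_0,c_1)$, and I denote its inverse by $\on{Log} : \mc V_0 \x \mc V_1 \to T\mc I^n$.

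Next I would record the equivariance. The reparametrisation action of $\ph \in \mc D^n(S^1)$ on $T\mc I^n$ is $(c,w)\mapsto(c\circ\ph,w\circ\ph)$, and the $\mc D^n(S^1)$-invariance of $G$ yields $\on{Exp}(c\circ\ph,w\circ\ph)=\on{Exp}(c,w)\circ\ph$; hence $\Phi$ intertwines this action with the diagonal action on $\mc I^n\x\mc I^n$, and therefore so does $\on{Log}$,
\[
\on{Log}(\et_0\circ\ph,\et_1\circ\ph)=\on{Log}(\et_0,\et_1)\circ\ph\,,
\]
whenever both sides are defined. Regarding $\mc V_0\x\mc V_1$ and $T\mc I^n$ as open subsets of $H^n(S^1,\R^{2d})$, the map $\on{Log}$ is a $\mc D^n(S^1)$-equivariant $C^\infty$-map, so Cor.~\ref{cor:local_M} (with $M=S^1$, $q=n>\tfrac32$) applies and shows that $\on{Log}$ restricts to a $C^\infty$-map
\[
\on{Log}:\bigl(\mc V_0\cap\mc I^{n+k}\bigr)\x\bigl(\mc V_1\cap\mc I^{n+k}\bigr)\to T\mc I^{n+k}\,.
\]
Since $c_0,c_1\in\mc I^{n+k}(S^1,\R^d)$, evaluating gives $\on{Log}(c_0,c_1)=(c_0,u)\in T\mc I^{n+k}$, i.e. $u\in H^{n+k}(S^1,\R^d)$.

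I expect the only step needing genuine care to be the first one: verifying that the block-triangular derivative really possesses a bounded inverse on $H^n(S^1,\R^{2d})$ and that the inverse function theorem is legitimately applied in this Hilbert-manifold setting. Once the equivariant $C^\infty$ inverse $\on{Log}$ is in hand, the remainder is a direct transcription of the proof of Prop.~\ref{prop:reg_diff}.
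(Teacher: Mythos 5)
Your proof is correct and follows essentially the same route as the paper: construct the local inverse $\on{Log}$ of the endpoint map $\pi \times \on{Exp}$, observe it inherits $\mc D^n(S^1)$-equivariance, and apply the (local) main theorem to conclude $u = \pi_2 \circ \on{Log}(c_0,c_1) \in H^{n+k}$. The only difference is that you spell out, via the block lower-triangular derivative and the inverse function theorem, why invertibility of $D_u\on{Exp}(c_0,u)$ gives local invertibility of the full endpoint map --- a step the paper asserts without elaboration.
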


\begin{proof}
The Riemannian metric $G$ is $\mc D^n(S^1)$-invariant and so its exponential map
\[
\on{Exp} : T\mc I^n(S^1,\R^d) \to \mc I^n(S^1,\R^d)
\]
is $\mc D^n(S^1)$-equivariant,
\[
\on{Exp}(c \circ \ph, v \circ \ph) = \on{Exp}(c, v) \circ \ph\,,
\]
for $v \in T_c\mc I^n(S^1,\R^d)$ and $\ph \in \mc D^n(S^1)$. Since $D_u \on{Exp}(c_0, u)$ is assumed to be invertible, it follows that the map
$\on{Id}_{\mc I^n} \times \on{Exp} : \mc I^n \times H^n \to \mc I^n \x \mc I^n$ is locally invertible around $(c_0, u)$.
Let
\[
\on{Log} : \mc U_0 \x \mc U_1 \to \mc I^n(S^1,\R^d) \times H^n(S^1,\R^d)
\]
be the local inverse, with $\mc U_i \subseteq \mc I^n(S^1,\R^d)$ an open neighborhood of $c_i$. Because $\on{Exp}$ and $\on{Id}_{\mc I^n}$ are $\mc D^n(S^1)$-equivariant, so is $\on{Log}$,
\[
\on{Log}(c_0 \circ \ph, c_1 \circ \ph) = \on{Log}(c_0, c_1) \circ \ph\,,
\]
for $c_i \in \mc U_i$ and $\ph \in \mc D^n(S^1)$ such that $c_i \circ \ph \in \mc U_i$. Now we apply Thm.~\ref{thm:thm_main} -- note that $\on{Log}$ is a $C^\infty$-function -- to conclude that
\[
\on{Log}: \left(\mc U_0 \cap \mc I^{n+k}(S^1,\R^d) \right) \x \left(\mc U_1 \cap \mc I^{n+k}(S^1,\R^d) \right)
\to \mc I^{n+k}(S^1,\R^d) \times H^{q+k}(S^1,\R^d)
\]
is also a $C^\infty$-function. Since we assumed that $c_0, c_1 \in \mc I^{n+k}(S^1,\R^d)$, we obtain that $u = \pi_2 \circ \on{Log}(c_0, c_1) \in H^{n+k}(S^1,\R^d)$.
\end{proof}

If $c_0 \in \mc I^{n+k}(S^1,\R^d)$ and $u \in H^{n+k}(S^1,\R^d)$, then the $\mc D^n(S^1)$-invariance of the exponential map implies that the whole geodesic $c(t) = \on{Exp}(c_0, t.u)$ is $H^{n+k}$-regular as well. Furthermore the above proposition remains valid for $k=\infty$. 

\begin{corollary}
Let $n\geq 2$ and let $G$ be a Sobolev metric of order $n$ with constant coefficients. If $c_0, c_1 \in \on{Imm}(S^1,\R^d)$ are non-conjugate along the geodesic $c(t)$ in $\mc I^n(S^1,\R^d)$, then $c(t) \in \on{Imm}(S^1,\R^d)$ for all $t$.
\end{corollary}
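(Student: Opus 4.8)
The plan is to deduce the corollary from the two preceding propositions, mirroring the diffeomorphism case (the corollary following Prop.~\ref{prop:reg_diff}). First I would write the geodesic joining the two endpoints as $c(t) = \on{Exp}(c_0, t.u)$, where $u = \p_t c(0) \in T_{c_0}\mc I^n(S^1,\R^d)$ is the initial velocity, so that $c_1 = \on{Exp}(c_0, u)$. By definition, the assertion that $c_0$ and $c_1$ are nonconjugate along $c(t)$ means precisely that the partial derivative $D_u \on{Exp}(c_0, u) : T_{c_0}\mc I^n \to T_{c_1}\mc I^n$ is a linear isomorphism, which is exactly the hypothesis required by the boundary value proposition.

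Next I would upgrade the regularity of $u$ from $H^n$ to $H^\infty$. Since $c_0, c_1 \in \on{Imm}(S^1,\R^d)$ are smooth, they lie in $\mc I^{n+k}(S^1,\R^d)$ for every $k \geq 0$. Applying the boundary value proposition for each finite $k$ therefore yields $u \in H^{n+k}(S^1,\R^d)$ for all $k$; because the geodesic between $c_0$ and $c_1$ on the given branch is unique, the velocity $u$ is the same for every $k$, so in fact $u \in \bigcap_k H^{n+k}(S^1,\R^d) = H^\infty(S^1,\R^d) = C^\infty(S^1,\R^d)$. (Equivalently one invokes the $k=\infty$ form of the proposition noted in the text.) Thus $u$ is a smooth vector field along the smooth curve $c_0$.

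Finally I would run the forward regularity statement to close the argument. With $c_0 \in \on{Imm}(S^1,\R^d)$ and $u \in C^\infty(S^1,\R^d)$ both smooth, the first proposition of this section in its $k=\infty$ form gives $\on{Exp}(c_0, t.u) \in \on{Imm}(S^1,\R^d)$ for every $t$, so the entire geodesic $c(t)$ consists of smooth immersions, as claimed.

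I expect the only delicate point to be the passage to $k=\infty$. Theorem~\ref{thm:thm_main}, on which both propositions rest, trades one order of map-regularity for one Sobolev derivative of the image and hence only ever produces a \emph{finite} gain. The corollary must therefore be obtained by iterating over all finite $k$ and intersecting the resulting regularity classes, which is legitimate here precisely because the local inverse $\on{Log}$ of $\on{Id}_{\mc I^n} \x \on{Exp}$ — and with it the velocity $u = \pi_2 \circ \on{Log}(c_0,c_1)$ — does not depend on $k$, so the finite-order conclusions are mutually consistent and assemble into smoothness. All the genuine analytic content already sits in Thm.~\ref{thm:thm_main} and the two propositions; the corollary itself is just this bootstrapping step.
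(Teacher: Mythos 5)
Your proof is correct and follows essentially the same route as the paper, which leaves the argument implicit: apply the boundary value proposition (in its $k=\infty$ form) to the fixed initial velocity $u$ to conclude $u \in C^\infty(S^1,\R^d)$, then invoke the forward regularity statement to get $c(t) \in \on{Imm}(S^1,\R^d)$ for all $t$. Your explicit bootstrapping over finite $k$ --- noting that the same fixed $u$ lies in $H^{n+k}$ for every $k$, so no uniqueness-of-geodesics argument is actually needed --- is precisely the justification behind the paper's remark that the proposition ``remains valid for $k=\infty$.''
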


It was shown in \cite[Thm.~5.2.]{Bruveris2015} that any two curves in the same connected component of $\mc I^n(S^1,\R^d)$ can be joined by a minimizing geodesic. The above corollary shows that in the space $\on{Imm}(S^1,\R^d)$ of smooth curves minimizing geodesics exist at least on an open neighborhood of the diagonal in $\on{Imm}(S^1,\R^d) \times \on{Imm}(S^1,\R^d)$.

\let\i=\ii           
\printbibliography

\end{document}